\newtheorem{theorem}{Theorem}[section]
\newtheorem{thm}[theorem]{Theorem}
\newtheorem{cor}[theorem]{Corollary}
\newtheorem{prop}[theorem]{Proposition}
\newtheorem{lemma}[theorem]{Lemma}
\theoremstyle{definition}
\newtheorem{remark}[theorem]{Remark}
\numberwithin{equation}{theorem}
\DeclareMathAlphabet{\mathpzc}{OT1}{pzc}{m}{it}
\begin{document}

\newcommand{\Trd}{{\mathrm{Trd}}}
\newcommand{\rad}{{\mathrm{rad}}}
\newcommand{\id}{{\mathrm{id}}}
\newcommand{\Ad}{{\mathrm{Ad}}}
\newcommand{\Ker}{{\mathrm{Ker}}}
\newcommand{\wedges}[1]{\d b_1\wedge\ldots\wedge \d b_{#1}}
\newcommand{\rank}{{\mathrm{rank}}}
\renewcommand{\dim}{{\mathrm{dim}}}
\newcommand{\coker}{{\mathrm{Coker}}}
\newcommand{\can}{\overline{\rule{2.5mm}{0mm}\rule{0mm}{4pt}}}
\newcommand{\End}{{\mathrm{End}}}
\newcommand{\Sand}{{\mathrm{Sand}}}
\newcommand{\Hom}{{\mathrm{Hom}}}
\newcommand{\Nrd}{{\mathrm{Nrd}}}
\newcommand{\Srd}{{\mathrm{Srd}}}
\newcommand{\ad}{{\mathrm{ad}}}
\newcommand{\rk}{{\mathrm{rk}}}
\newcommand{\Mon}{{\mathrm{Mon}}}
\newcommand{\disc}{{\mathrm{disc}}}
\newcommand{\Sym}{{\mathrm{Sym}}}
\newcommand{\Skew}{{\mathrm{Skew}}}
\newcommand{\Nrp}{{\mathrm{Nrp}}}
\newcommand{\Trp}{{\mathrm{Trp}}}
\newcommand{\Alt}{{\mathrm{Alt}}}
\newcommand{\Symd}{{\mathrm{Symd}}}
\renewcommand{\dir}{{\mathrm{dir}}}
\renewcommand{\geq}{\geqslant}
\renewcommand{\leq}{\leqslant}
\newcommand{\an}{{\mathrm{an}}}
\newcommand{\alt}{{\mathrm{alt}}}
\renewcommand{\Im}{{\mathrm{Im}}}
\newcommand{\Int}{{\mathrm{Int}}}
\renewcommand{\d}{{\mathrm{d}}}
\newcommand{\qp}[2]{\mbox{$#1 \otimes^\mathrm{qp}#2 $}}
\newcommand{\qf}[1]{\mbox{$\langle #1\rangle $}}
\newcommand{\pff}[1]{\mbox{$\langle\!\langle #1
\rangle\!\rangle $}}
\newcommand{\pfr}[1]{\mbox{$\langle\!\langle #1 ]]$}}
\newcommand{\HH}{{\mathbb H}}
\newcommand{\s}{{\sigma}}
\newcommand{\lra}{{\longrightarrow}}
\newcommand{\ZZ}{{\mathbb Z}}
\newcommand{\NN}{{\mathbb N}}
\newcommand{\FF}{{\mathbb F}}
\newcommand{\PERP}{\mbox{\raisebox{-.5ex}{{\Huge $\perp$}}}}
\newcommand{\Perp}{\mbox{\raisebox{-.2ex}{{\Large $\perp$}}}}
\newcommand{\M}[1]{\mathbb{M}( #1)}
\newcommand{\ind}{{\mathrm{ind}}}
\newcommand{\coind}{{\mathrm{coind}}}

\newcommand{\vf}{\varphi}
\newcommand{\mg}[1]{#1^{\times}}

\newcommand{\ba}{\overline{\rule{2.5mm}{0mm}\rule{0mm}{4pt}}} 
\newcommand{\ra}{\rightarrow}

\title{Symplectic Involutions, quadratic pairs and  function fields of  conics}

\author[A.~Dolphin]{Andrew Dolphin}
\author[A.~Qu\'eguiner-Mathieu]{Anne Qu\'eguiner-Mathieu}
\address{Universiteit Antwerpen, Departement Wiskunde-Informatica, 
Middelheimlaan 1,
2020 Antwerpen, Belgium}
\email{Andrew.Dolphin@uantwerpen.be} 
\address{Universit{\'e}
 Paris 13, Sorbonne Paris Cit{\'e}, LAGA, CNRS (UMR 7539), 99
avenue Jean-Baptiste Cl{\'e}ment, F-93430 Villetaneuse, France}
\email{queguin@math.univ-paris13.fr}
\thanks{The first author is supported by the {Deutsche Forschungsgemeinschaft} project \emph{The Pfister Factor Conjecture in characteristic two} (BE 2614/4) and the FWO Odysseus programme (project \emph{Explicit Methods in Quadratic Form Theory}).
Both authors acknowledge the support of the French Agence Nationale de la Recherche (ANR) under reference ANR-12-BL01-0005. 
}

\begin{abstract} In this paper	 we study symplectic involutions and quadratic pairs that become hyperbolic over the function field of a conic. In particular, we classify them in degree 4 and deduce results on $5$ dimensional  minimal quadratic forms, thus extending to arbitrary fields some results of~\cite{queguiner:conic}, which were only known in characteristic different from $2$. 

\medskip\noindent
\emph{Keywords:} Central simple algebras, involutions, characteristic two, quadratic forms, quadratic pairs, conics. 

\medskip\noindent
\emph{Mathematics Subject Classification (MSC 2010):} 11E39, 11E81, 12F05, 12F10. 
\end{abstract}

\maketitle

\section{Introduction}

Given two projective homogeneous varieties $X$ and $X'$, under the algebraic groups $G$ and $G'$ respectively, one may ask whether $X$ has a rational point over the function field of $X'$. This is a classical question in the theory of algebraic groups. Many particular cases have been studied, leading to beautiful results such as the subform theorem in quadratic form theory~\cite[Prop.~22.4 and Thm.~22.5]{Elman:2008}, the Merkurjev, Wadsworth and Panin index reduction formulae~\cite{MPT1},~\cite{MPT2} and the Karpenko and Karpenko-Zhykhovich theorems on the behavior of an involution under generic splitting of the underlying algebra~\cite{Karp},~\cite{KZ}. 
However, this question is wide open in general. For instance, not much is known on anisotropic quadratic forms that become isotropic over the function field of a given quadric (see for instance~\cite[Chap.~5]{Kahn}). 

An interesting particular case is the following: consider a quaternion algebra $Q$ with norm form $n_Q$ and let $X'$ be the associated conic whose function field will be denoted by $F_Q$. 
By the subform theorem,  an  anisotropic quadratic form that becomes hyperbolic over $F_Q$ is a multiple of the norm form $n_Q$ of $Q$. Conversely, there are quadratic forms that become isotropic over $F_Q$ but do not contain any subform similar to the conic. This observation, due to Wadsworth, led to the notion of $F_Q$-minimal form, introduced by Hoffmann and studied in~\cite{hoffmann:minformsconic} and~\cite{hoffmann:minforms} and also in~\cite{fairve:thesis} in characteristic $2$. 

In~\cite{queguiner:conic}, involutions of the first kind that become hyperbolic over the function field $F_Q$ were studied, under the assumption that the base field has characteristic different from $2$. This provides partial answers to the question above, for some varieties $X$, which are projective homogeneous under groups of type $C$ and $D$. The same question in arbitrary characteristic leads to the study of  symplectic involutions and quadratic pairs that become hyperbolic over $F_Q$. This is the main topic of this paper. In particular, we describe, in characteristic $2$, algebras with a symplectic involution or a quadratic pair that become split and hyperbolic over $F_Q$, and those of degree $4$ that become hyperbolic (non necessarily split) over $F_Q$, see \S~\ref{symp.section} and \S~\ref{sec:qp} respectively. 

One may also ask the same question for orthogonal involutions, however in characteristic $2$ orthogonal involutions  never become hyperbolic. Instead, one must work with the weaker concept of metabolicity, see \cite{dolphin:metainv}. Moreover, the main results in~ \cite{dolphin:direct} reduce
the question to the case of symplectic involutions.   More precisely, an orthogonal involution over a field $F$ of characteristic $2$ becomes metabolic over $F_Q$ if and only if it is an orthogonal sum, in the sense of Dejaiffe~\cite{dejaiffe:orthsums}, of a metabolic orthogonal involution and a symplectic involution that becomes hyperbolic over the same  function field. We therefore only consider symplectic involutions and quadratic pairs in the sequel.

The exceptional isomorphism $B_2\equiv C_2$, and its algebraic consequences described in~\cite[15.C]{Knus:1998}, show that symplectic involutions in degree $4$ are closely related to $5$-dimensional quadratic forms, and hyperbolicity for the involution corresponds to isotropy for the quadratic form. Therefore Faivre's characterisation of $5$-dimensional $F_Q$-minimal forms~\cite[(5.2.12)]{fairve:thesis},  which extends to characteristic $2$ an analogus  statement of Hoffmann, Lewis and Van Geel~\cite[Prop.~4.1]{hoffmann:minforms}, follows easily from our results, see~\S\ref{sec:minform}. 

All results in this paper are already known in characteristic different from $2$, see~\cite{queguiner:conic}. 
Therefore, even though most of our arguments could be written in arbitrary characteristic, we assume throughout the paper that the base field has characteristic $2$, for ease of exposition. The place where we most depart from the characteristic not $2$ case is section~\ref{section:degree4}, notably for the proof of Proposition~\ref{prop:containq}. 

\section{Notations and preliminary observations}\label{section:basics}

Throughout the paper, $F$ is a field of characteristic $2$, $Q$ is the quaternion algebra $Q=[a,b)$ over $F$ and $F_Q$ is the function field of the associated conic (see below for a precise description). 
We refer the reader to~\cite{pierce:1982} as a general reference on central simple algebras, \cite{Knus:1998} for involutions and quadratic pairs and~\cite{Knus:1991} and~\cite{Elman:2008} for hermitian, bilinear and quadratic forms over $F$. For the reader's convenience, we recall below a few basics on forms, involutions and quadratic pairs in characteristic $2$. We also state some lemmas which are used in the proofs of the main results of the paper. 

\subsection{Hermitian, bilinear and quadratic forms} 

Let $(D,\theta)$ be an $F$-division algebra with involution and $h:\,V\times V\rightarrow (D,\theta)$ a hermitian form. If $h(x,y)=0$ for all $y\in V$ implies $x=0$, we say $h$ is nondegenerate. Bilinear forms are hermitian forms over $(F,\id)$. Most of the hermitian and bilinear forms considered below are nondegenerate. 

For $b\in F^\times$, we denote the $2$ dimensional  symmetric bilinear forms 
\begin{multline*}
(x_1,x_2)\times (y_1,y_2)\mapsto x_1y_1+ bx_2y_2
\ \mbox{ and }\ (x_1,x_2)\times (y_1,y_2)\mapsto x_1y_2+ x_2y_1
\end{multline*}
by $\qf{1,b}^{bi}$ and $\HH^{bi}$, respectively. 
For a nonnegative integer m, by an m-fold bilinear Pfister form, we mean a nondegenerate symmetric bilinear form isometric to a tensor product of $m$ binary symmetric bilinear forms representing $1$; we use the notation 
$\pff{b_1,\dots, b_m}\simeq\qf{1,b_1}^{bi}\otimes\dots\otimes\qf{1,b_m}^{bi}$. 

Let $q:\,V\rightarrow F$ be a quadratic form and denote  its polar form by $b_q$, defined by $b_q(x,y)=q(x+y)+q(x)+q(y)$. It is an alternating, hence hyperbolic, bilinear form over $V$. The quadratic form $q$ is called nonsingular if its polar form is nondegenerate. If the polar form $b_q$ has a radical of dimension at most $1$ and the non-zero vectors in this radical are anisotropic then the quadratic form is called nondegenerate. In particular, nonsingular quadratic forms are nondegenerate. Note that both notions are preserved under scalar extensions. 
For all $b_1,b_2,c\in F$, we let $[b_1,b_2]$ be the nonsingular quadratic form $(x,y)\rightarrow b_1x^2+xy+b_2y^2$ and $\qf{c}$ the quadratic form $x\rightarrow cx^2$.
We denote $[0,0]$ by $\HH$.
 If $c$ is non-zero, $\qf{c}$ and $[b_1,b_2]\perp\qf{c}$ are nondegenerate. 

To a quadratic form $q:\,V\rightarrow F$ and a bilinear form $b:\,W\times W\rightarrow F$, one associates the quadratic form, denoted by $b\otimes q$ and defined on $W\otimes V$ by $$(b\otimes q)(w\otimes v)=b(w,w)q(v),\mbox{ see~\cite[p.51]{Elman:2008}}.$$
For any nonnegative integer $m$, by an $m$-fold quadratic Pfister form we mean a quadratic form that is isometric to the tensor product of an $(m-1)$-fold bilinear Pfister form and a nonsingular binary quadratic form representing $1$. We use the notation 
$\pfr{b_1,\dots,b_{m-1},c}\simeq\pff{b_1,\ldots, b_{m-1}}\otimes [1,c]$. 

Given two quadratic spaces $(V,\rho)$ and $(V',\rho')$, we say that $\rho'$ is dominated by $\rho$ if there exists an isometric embedding of $V'$ in $V$, that is $f:\,V'\hookrightarrow V$ such that $\rho(f(x))=\rho'(x)$ for all $x\in V'$. If in addition there exists a quadratic form $\rho''$ such that $\rho=\rho'\perp\rho''$, we say that $\rho'$ is a subform of $\rho$. By~\cite[(7.10)]{Elman:2008}, a nonsingular quadratic  form dominated by $\rho$ is a subform. 
However,  this is not true in general. For example, the form $[1,a]\perp\qf{b}$ is dominated by $\pfr{a,b}=[1,a]\perp\qf{b}[1,a]$ but it is not a subform.

Even though  one cannot cancel  quadratic forms in general in  characteristic $2$, one can always cancel  nonsingular forms, and in particular hyperbolic planes, see~\cite[(8.4)]{Elman:2008}. 
Hence the Witt group of nonsingular quadratic forms over $F$ is well defined and denoted by $W_q(F)$. 
Moreover, the action taking a  tensor product of a symmetric bilinear form with a quadratic form gives $W_q(F)$  the  structure of a $W(F)$-module, where $W(F)$ is the Witt ring of symmetric bilinear forms over $F$.
We let $I^m_q(F)$ be the ideal generated by $m$-fold quadratic Pfister forms over $F$. Note that the exponent in this notation differs by $1$ from the exponent in~\cite{Knus:1998}.

%We finish this section with a few lemmas, which will be used later on. 
We recall some well-known and useful identities:\begin{lemma}
For $b_1,b_2,c_1,c_2\in F$ and $x\in F^\times$
we have  
\begin{eqnarray}\label{eqnarray:nonsingiso}
[b_1,b_2]\perp[c_1,c_2]\simeq [b_1+c_1, b_2] \perp[c_1,b_2+c_2]\,,
\end{eqnarray}
\begin{eqnarray}
\label{qfidentity}
[1,b_1]\perp[1,b_2]\simeq [1,b_1+b_2]\perp\HH\,,
\end{eqnarray}
\begin{eqnarray}\label{eqnarray:multiply}
x[b_1,b_2]\simeq [x\cdot b_1, x^{-1}\cdot b_2]\,,\end{eqnarray}
\begin{eqnarray}\label{eqnarray:singiso}
[b_1,b_2]\perp\qf{c_1}\simeq[b_1+c_1,b_2]\perp\qf{c_1}\end{eqnarray}
and if $[b_1,b_2]\perp\qf{c_1}$ is isotropic, then 
 \begin{eqnarray}\label{eqnarray:isoiso}
 [b_1,b_2]\perp\qf{c_1}\simeq \HH\perp\qf{c_1}\,.\end{eqnarray}
 \end{lemma}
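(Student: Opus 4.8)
The plan is to verify each of the five isometries by an explicit change of variables, which is the standard way to establish such identities for quadratic forms in characteristic $2$ (over a general field, not a quadratically closed one, so we cannot rely on diagonalization). For~\eqref{eqnarray:nonsingiso}, I would write the form $[b_1,b_2]\perp[c_1,c_2]$ on a basis $e_1,e_2,f_1,f_2$ with $q(x_1e_1+x_2e_2+y_1f_1+y_2f_2)=b_1x_1^2+x_1x_2+b_2x_2^2+c_1y_1^2+y_1y_2+c_2y_2^2$ and look for a linear substitution that moves the ``cross term'' $x_1y_1$ weight $c_1$ around; the natural guess is $x_1\mapsto x_1$, $x_2\mapsto x_2+y_1$ (or a similar shear mixing the two hyperbolic-type pairs), and one checks directly that the resulting form is $[b_1+c_1,b_2]\perp[c_1,b_2+c_2]$, using that $x_1x_2+x_1y_1$ etc.\ recombine correctly and that $2=0$ kills the unwanted terms. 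Identity~\eqref{qfidentity} is then the special case $b_1'=c_1'=1$, $b_2'=b_1$, $c_2'=b_2$ of~\eqref{eqnarray:nonsingiso}, since $[1,b_1]\perp[1,b_2]\simeq[1+1,b_1]\perp[1,b_1+b_2]=[0,b_1]\perp[1,b_1+b_2]$, and $[0,b_1]\simeq\HH$ because it is isotropic and binary nonsingular (and any isotropic nonsingular binary form is hyperbolic, cf.~\cite[(8.4)]{Elman:2008}); I would note the reordering $[1,b_1+b_2]\perp\HH$.

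For~\eqref{eqnarray:multiply}, scaling the quadratic form $b_1x^2+xy+b_2y^2$ by $x$ gives $xb_1x^2+xxy+xb_2y^2$; substituting $y\mapsto x^{-1}y$ replaces the middle term by $x\cdot x^{-1}xy=xy$ and the last term by $xb_2x^{-2}y^2=x^{-1}b_2y^2$, yielding $[xb_1,x^{-1}b_2]$ — a one-line computation. For~\eqref{eqnarray:singiso}, on the basis $e_1,e_2,g$ with $q=b_1x^2+xy+b_2y^2+c_1z^2$, the substitution $x\mapsto x+z$ (fixing $y$ and $z$) adds $b_1(z^2+2xz)+c_1 z^2+\ldots$; modulo $2$ the cross terms to watch are $2b_1xz$ (which vanishes) and the new term $xz$ coming from $xy\mapsto(x+z)y$ — so I should instead use a substitution that only touches the singular variable, e.g.\ $z\mapsto z+x$ or $z\mapsto z+\lambda x+\mu y$, chosen so that $c_1(z+\ldots)^2$ produces exactly the correction $c_1x^2$ turning $b_1$ into $b_1+c_1$ while the cross term $c_1\cdot 2(\ldots)z$ vanishes in characteristic $2$; pinning down the right $\lambda,\mu$ is the only place needing a moment's care. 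Finally~\eqref{eqnarray:isoiso} follows by combining the previous identities with the structure of isotropic forms: if $[b_1,b_2]\perp\qf{c_1}$ is isotropic with $c_1\neq0$, then either $[b_1,b_2]$ is already isotropic, hence $\simeq\HH$ and we are done, or the isotropic vector has nonzero $z$-coordinate, in which case~\eqref{eqnarray:singiso} lets us adjust $b_1$ to arrange $b_1=0$ (so that $[b_1,b_2]=[0,b_2]$ has $e_1$ isotropic) and then $[0,b_2]\simeq\HH$ by~\cite[(8.4)]{Elman:2008}; reordering gives $\HH\perp\qf{c_1}$.

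The computations are all elementary, so the main obstacle is purely bookkeeping: choosing, for~\eqref{eqnarray:nonsingiso} and~\eqref{eqnarray:singiso}, the precise linear substitution and tracking every cross term modulo~$2$ so that no stray terms survive. Once~\eqref{eqnarray:nonsingiso}, \eqref{eqnarray:multiply} and~\eqref{eqnarray:singiso} are in hand, identities~\eqref{qfidentity} and~\eqref{eqnarray:isoiso} are immediate corollaries as indicated above, using only that an isotropic nonsingular binary quadratic form is hyperbolic.
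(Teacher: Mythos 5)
Your overall plan---direct change-of-variables verification for \eqref{eqnarray:nonsingiso}--\eqref{eqnarray:singiso} and a derivation of \eqref{eqnarray:isoiso} from the earlier identities---is sound, and for the first four identities it coincides with what the paper intends (it simply declares them ``easy to check''). Concretely, \eqref{eqnarray:nonsingiso} is realised by the unipotent substitution $x_2\mapsto x_2+y_2$, $y_1\mapsto y_1+x_1$ (your guess $x_2\mapsto x_2+y_1$ is not quite the right shear, but the correct one is of the kind you describe), and \eqref{eqnarray:singiso} by $z\mapsto z+x$, exactly as you suggest; \eqref{qfidentity} and \eqref{eqnarray:multiply} are as you say. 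For \eqref{eqnarray:isoiso} the paper gives no proof but cites Hoffmann--Laghribi, so your self-contained derivation is a genuinely different (and more elementary) route for that item.

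There is, however, a gap in your argument for \eqref{eqnarray:isoiso}, at the step ``\eqref{eqnarray:singiso} lets us adjust $b_1$ to arrange $b_1=0$''. Identity \eqref{eqnarray:singiso}, even in its general form obtained from $z\mapsto z+\lambda x+\mu y$, only replaces $b_1$ by $b_1+\lambda^2c_1$, so it yields $b_1=0$ only when $b_1/c_1$ is a square in $F$, which need not hold. The repair is to first normalise the isotropic vector: write it as $v=x_0e_1+y_0e_2+g$ (after scaling, its $g$-component may be taken to be $1$); since $c_1\neq 0$, isotropy of $v$ forces $(x_0,y_0)\neq(0,0)$, so you may replace $e_1$ by $e_1'=x_0e_1+y_0e_2$ and adjust the second basis vector of the nonsingular plane so that the binary summand becomes $[b_1',b_2']$ with $b_1'=q(e_1')$; then $0=q(e_1'+g)=b_1'+c_1$ gives $b_1'=c_1$, and \eqref{eqnarray:singiso} produces $[0,b_2']\perp\qf{c_1}\simeq\HH\perp\qf{c_1}$. (Equivalently and more quickly: the plane spanned by $v$ and a suitable vector of the nonsingular summand is nonsingular and isotropic, hence a hyperbolic plane, and its orthogonal complement is the radical $Fg$, which carries $\qf{c_1}$.) Note finally that \eqref{eqnarray:isoiso} genuinely requires $c_1\neq 0$, as your case split tacitly assumes: for $c_1=0$ the form $[b_1,b_2]\perp\qf{0}$ is always isotropic, yet it is not isometric to $\HH\perp\qf{0}$ when $[b_1,b_2]$ is anisotropic.
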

 \begin{proof} The first four isometries are easy to check. For the final isometry, see
 \cite[\S 2]{HoffmannLaghribi:qfpfisterneigbourc2}.
 \end{proof}
  Note that (\ref{eqnarray:isoiso}) provides examples where  cancellation does not hold: in general, one cannot cancel  $\qf{c_1}$. 
 We will also use a particular case of a general result of Hoffmann and Laghribi~\cite[(3.9)]{HoffmannLaghribi:qfpfisterneigbourc2}: 
\begin{prop}\label{prop:nsc}
Consider $b_1,b_2,c_1,c_2\in F$ and $d\in F^\times$ such that 
$[b_1,b_2]\perp\qf{d}\simeq [c_1,c_2]\perp\qf{d}$. 
For all $d'\in F$, there exists $d''\in F$ such that 
$$[b_1,b_2]\perp d[1,d']\simeq [c_1,c_2]\perp d[1,d''].$$
\end{prop}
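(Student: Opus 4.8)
The plan is to argue directly with quadratic spaces, gluing the given isometry of $3$-dimensional forms to one extra vector. Using \eqref{eqnarray:multiply} we may rewrite $d[1,d']\simeq[d,d^{-1}d']$ and $d[1,d'']\simeq[d,d^{-1}d'']$, so, writing $e=d^{-1}d'$, it is enough to find $e'\in F$ with $[b_1,b_2]\perp[d,e]\simeq[c_1,c_2]\perp[d,e']$ and then set $d''=de'$. Realise $\varphi:=[b_1,b_2]\perp[d,e]$ on $V=V_1\perp V_2$, where $V_1$ has a basis $s_1,t_1$ with $q(s_1)=b_1$, $q(t_1)=b_2$, $b_q(s_1,t_1)=1$, and $V_2$ has a basis $s_2,t_2$ with $q(s_2)=d$, $q(t_2)=e$, $b_q(s_2,t_2)=1$. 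The subspace $W:=V_1\perp\langle s_2\rangle$ carries $[b_1,b_2]\perp\qf{d}$, which by hypothesis is isometric to $[c_1,c_2]\perp\qf{d}$.

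The structural heart of the argument is that the anisotropic vector carrying the quasilinear part is rigid. Transporting a standard basis of $[c_1,c_2]\perp\qf{d}$ through such an isometry produces vectors $\bar s_1,\bar t_1\in W$ with $q(\bar s_1)=c_1$, $q(\bar t_1)=c_2$, $b_q(\bar s_1,\bar t_1)=1$, together with a vector of value $d$ spanning the orthogonal complement of the nonsingular plane $\langle\bar s_1,\bar t_1\rangle$ in $W$. That complement is contained in, hence equal to, the radical of $b_q|_W$; since $V_1$ is nonsingular, this radical is exactly $\langle s_2\rangle$, and the only $\lambda s_2$ with $q(\lambda s_2)=\lambda^2 d=d$ is $s_2$ itself, because $\lambda^2=1$ forces $\lambda=1$ in characteristic $2$. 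Thus $\{\bar s_1,\bar t_1,s_2\}$ is a basis of $W$ and $W=\langle\bar s_1,\bar t_1\rangle\perp\langle s_2\rangle$.

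It remains to correct $t_2$ so that it becomes orthogonal to $\langle\bar s_1,\bar t_1\rangle$. Write $\bar s_1=v_1+\alpha s_2$ and $\bar t_1=v_1'+\beta s_2$ with $v_1,v_1'\in V_1$ and $\alpha,\beta\in F$, and set $t_2':=t_2+\beta\bar s_1+\alpha\bar t_1$. Expanding $b_q$ and using that $t_2\perp V_1$, $s_2\perp W$, one checks $b_q(s_2,t_2')=1$ and $b_q(\bar s_1,t_2')=b_q(\bar t_1,t_2')=0$, so $\langle s_2,t_2'\rangle\perp\langle\bar s_1,\bar t_1\rangle$ and $V=\langle\bar s_1,\bar t_1\rangle\perp\langle s_2,t_2'\rangle$. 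Expanding $q(t_2')$ via $q(x+y)=q(x)+q(y)+b_q(x,y)$ gives $q(t_2')=e+\beta^2c_1+\alpha^2c_2+\alpha\beta=:e'$, whence $\langle s_2,t_2'\rangle\cong[d,e']$ and $\varphi\simeq[c_1,c_2]\perp[d,e']$. Passing back through \eqref{eqnarray:multiply}, one may take $d''=de'=d'+d\beta^2c_1+d\alpha^2c_2+d\alpha\beta$.

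The only genuinely delicate point is the rigidity statement of the second paragraph: the whole construction works because the given isometry of the $3$-dimensional spaces necessarily fixes $s_2$, which is what allows it to be glued to $t_2$. Everything else—the orthogonalisation of $t_2'$ and the evaluation of $q(t_2')$—consists of routine expansions of the polarization identity in characteristic $2$, which I would carry out explicitly but do not expect to pose any difficulty.
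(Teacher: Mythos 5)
Your argument is correct, but it is a genuinely different route from the paper: the paper gives no proof at all for this proposition, simply quoting it as a special case of a general result of Hoffmann and Laghribi (their (3.9), which treats arbitrary quasilinear parts), whereas you give a self-contained geometric proof of exactly the special case needed. I checked the key steps: the orthogonal complement of the nonsingular plane $\langle\bar s_1,\bar t_1\rangle$ in the $3$-dimensional space $W$ is $1$-dimensional and contains the radical of $b_q|_W$, which is $\langle s_2\rangle$ because $V_1$ is nonsingular and $b_q$ is alternating; hence the two lines coincide, and $q(\lambda s_2)=d$ forces $\lambda=1$. (Your phrase ``contained in, hence equal to'' has the inclusion backwards --- it is the radical that sits inside the complement --- but the dimension count gives equality either way, and the cleanest formulation is that an isometry carries the radical of $[c_1,c_2]\perp\qf{d}$ onto the radical of $b_q|_W$.) The orthogonalisation $t_2'=t_2+\beta\bar s_1+\alpha\bar t_1$ and the evaluation $q(t_2')=e+\beta^2c_1+\alpha^2c_2+\alpha\beta$ both check out, using $b_q(\bar s_1,t_2)=\alpha$ and $b_q(\bar t_1,t_2)=\beta$. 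What your approach buys is an explicit formula $d''=d'+d\beta^2c_1+d\alpha^2c_2+d\alpha\beta$ and independence from the external reference; what the citation buys is brevity and the full strength of the Hoffmann--Laghribi cancellation statement, of which only this fragment is used here.
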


For all $\lambda,\mu\in F^\times$, the sum of  bilinear Pfister forms $\pff{\lambda}\perp\pff{\mu}\perp\pff{\lambda\mu}$ is isometric to $\pff{\lambda,\mu}\perp\HH^{bi}$, hence we have the following lemma.
 \begin{lemma} 
 \label{Pfisteridentity}
 Let $\rho$ be a quadratic form whose Witt class belongs to $I_q^m(F)$, and $\lambda,\mu\in F^\times$. 
 We have $$\pff{\lambda}\otimes \rho\perp\pff{\mu}\otimes\rho\equiv \pff{\lambda\mu}\otimes\rho\mod I_q^{m+2}(F).$$
 \end{lemma}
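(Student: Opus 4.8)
The plan is to deduce the congruence directly from the isometry of bilinear forms $\pff{\lambda}\perp\pff{\mu}\perp\pff{\lambda\mu}\simeq\pff{\lambda,\mu}\perp\HH^{bi}$ recalled just above, by exploiting the $W(F)$-module structure of $W_q(F)$. Since $\HH^{bi}$ is metabolic, its class vanishes in $W(F)$, so passing the isometry to Witt classes gives $[\pff{\lambda}]+[\pff{\mu}]+[\pff{\lambda\mu}]=[\pff{\lambda,\mu}]$ in $W(F)$; as $-1$ is a square in characteristic $2$ we have $-[\pff{\lambda\mu}]=[\pff{\lambda\mu}]$, and this may be rewritten as $[\pff{\lambda}]+[\pff{\mu}]=[\pff{\lambda,\mu}]+[\pff{\lambda\mu}]$ in $W(F)$.

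First I would note that only the Witt class of $\rho$ matters on both sides of the statement, so one may assume $\rho$ is nonsingular and all the tensor products $\pff{\nu}\otimes\rho$ lie in $W_q(F)$. Applying the group homomorphism $\xi\mapsto\xi\cdot[\rho]$ from $W(F)$ to $W_q(F)$ to the last displayed identity, one obtains
$$[\pff{\lambda}\otimes\rho]+[\pff{\mu}\otimes\rho]=[\pff{\lambda,\mu}\otimes\rho]+[\pff{\lambda\mu}\otimes\rho]\quad\text{in }W_q(F).$$
The remaining point is that $\pff{\lambda,\mu}\otimes\rho$ represents a class in $I_q^{m+2}(F)$: writing $[\rho]$ as a $W(F)$-linear combination of classes of $m$-fold quadratic Pfister forms $\pfr{b_1,\dots,b_{m-1},c}$ and using $\pff{\lambda,\mu}\otimes\pfr{b_1,\dots,b_{m-1},c}\simeq\pfr{\lambda,\mu,b_1,\dots,b_{m-1},c}$, each summand becomes a $W(F)$-multiple of an $(m+2)$-fold quadratic Pfister form. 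Removing this term from the displayed identity yields exactly the asserted congruence.

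There is essentially no serious obstacle here: the whole content is already contained in the recalled bilinear Pfister identity together with the fact that $W_q(F)$ is a $W(F)$-module. The only matters needing a little care are bookkeeping ones — reducing to a nonsingular $\rho$, observing $[\HH^{bi}]=0$ in $W(F)$ so that this term disappears, noting that the sign ambiguity of $[\pff{\lambda\mu}\otimes\rho]$ is harmless in characteristic $2$, and verifying the routine inclusion $I^2(F)\cdot I_q^m(F)\subseteq I_q^{m+2}(F)$.
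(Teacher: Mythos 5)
Your proof is correct and follows essentially the same route as the paper, which derives the lemma directly from the isometry $\pff{\lambda}\perp\pff{\mu}\perp\pff{\lambda\mu}\simeq\pff{\lambda,\mu}\perp\HH^{bi}$ via the $W(F)$-module structure of $W_q(F)$; you have merely spelled out the bookkeeping (vanishing of $[\HH^{bi}]$, irrelevance of signs in characteristic $2$, and the inclusion of $\pff{\lambda,\mu}\otimes\rho$ in $I_q^{m+2}(F)$) that the paper leaves implicit.
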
 
 The following is a well known and useful property of quadratic Pfister forms: 
 \begin{lemma}\label{lemma:round}
Let $\pi$ be an $m$-fold quadratic Pfister form over $F$ and let $c\in F^\times$ be an element represented by $\pi$. Then $\pi\simeq c\pi$ and for all $d\in F^\times$ we have $\pff{d}\otimes\pi\simeq \pff{cd}\otimes \pi$.
\end{lemma}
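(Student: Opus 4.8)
The plan is to deduce the second isometry formally from the first, and to prove the first --- the roundness of $\pi$ --- by a short computation in the Witt group $W_q(F)$.

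First I would record that the second assertion follows from the first. For any $\lambda\in F^\times$ one has $\pff{\lambda}\otimes\pi=\qf{1,\lambda}^{bi}\otimes\pi=\pi\perp\lambda\pi$ by the definitions. If $\pi\simeq c\pi$, then scaling by $d$ gives $d\pi\simeq (cd)\pi$, whence
\[
\pff{d}\otimes\pi=\pi\perp d\pi\simeq\pi\perp (cd)\pi=\pff{cd}\otimes\pi.
\]

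For the first assertion, write $\pi\simeq \pff{b_1,\dots,b_{m-1}}\otimes[1,c_0]$ with $c_0\in F$. The key point is that
\[
\pff{c}\otimes\pi\simeq\pff{c,b_1,\dots,b_{m-1}}\otimes[1,c_0]
\]
is again a quadratic Pfister form, now of fold $m+1$. As $\pi$ is a Pfister form it represents $1$, so $c\pi$ represents $c$; and $\pi$ represents $c$ by hypothesis. Hence $\pff{c}\otimes\pi=\pi\perp c\pi$ represents $c+c=0$, i.e.\ it is isotropic. Since an isotropic quadratic Pfister form is hyperbolic (see~\cite[\S 9]{Elman:2008}), $\pi\perp c\pi$ is hyperbolic, so $[\pi]+[c\pi]=0$ in $W_q(F)$; as $W_q(F)$ is $2$-torsion (because $\rho\perp\rho$ is hyperbolic for every nonsingular $\rho$), this gives $[c\pi]=[\pi]$. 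Finally $\pi$ and $c\pi$ are nonsingular of the same dimension $2^m$ and Witt-equivalent, hence have isometric anisotropic parts and, therefore, the same number of hyperbolic planes; so $\pi\simeq c\pi$.

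The substantial input is the fact that an isotropic quadratic Pfister form is hyperbolic, which is where the real content of roundness sits; it is classical, but if one wants a self-contained treatment one proves roundness by induction on $m$. The base case $m=1$ is the classification of nonsingular binary quadratic forms by their Arf invariant and a single represented value, readily handled using~\eqref{eqnarray:multiply} (note that $\pi$ represents $c$ means $\pi\simeq[c,e]$, so $c\pi\simeq[c^2,c^{-1}e]\simeq[1,c_0]$). For the inductive step one writes $\pi\simeq\sigma\perp a\sigma$ with $\sigma$ an $(m-1)$-fold quadratic Pfister form (round by induction); the delicate case, which is the main obstacle, is when the represented element $c$ has nonzero components in both copies of $\sigma$, and one reduces it --- using the roundness of $\sigma$ to absorb one component --- to the roundness of the bilinear Pfister form obtained by regrouping the bilinear slots of $\pi$.
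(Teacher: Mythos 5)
Your deduction of the second assertion from the first is exactly the paper's: $\pff{d}\otimes\pi\simeq\pi\perp d\pi$ and $d\pi\simeq (cd)\pi$. For the first assertion the paper simply cites \cite[(9.9)]{Elman:2008} (roundness of quadratic Pfister forms), whereas you derive it from a different classical input, namely that an isotropic quadratic Pfister form is hyperbolic: since $\pi$ represents both $1$ and $c$, the $(m+1)$-fold Pfister form $\pff{c}\otimes\pi=\pi\perp c\pi$ is isotropic, hence hyperbolic, and then Witt decomposition and cancellation for nonsingular forms in characteristic $2$ (\cite[(8.4),(8.5)]{Elman:2008}) upgrade the Witt equivalence $[\pi]=[c\pi]$ to an isometry, the two forms having equal dimension. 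That deduction is correct in every step. The one caveat is bibliographic rather than mathematical: in the standard reference the statement ``isotropic quadratic Pfister forms are hyperbolic'' (\cite[(9.10)]{Elman:2008}) is itself proved \emph{from} roundness, so if you trace the citations your argument runs in a circle; to make it genuinely independent you need the self-contained inductive proof of roundness that you only sketch at the end (and whose delicate mixed case you flag but do not carry out). You correctly identify that this is where the real content lies. In short: your route shows the well-known equivalence ``isotropic $\Rightarrow$ hyperbolic'' $\Longleftrightarrow$ ``round'' for quadratic Pfister forms and buys a cleaner conceptual picture, while the paper's one-line citation avoids the ordering issue entirely; either is acceptable provided the external fact is taken as known.
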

\begin{proof}
See  \cite[(9.9)]{Elman:2008} for the isometry $\pi\simeq c\pi$. The second assertion follows immediately, since $\pff{d}\otimes \pi\simeq \pi\perp d\pi$. \end{proof}

From this	 we deduce a characteristic $2$ analogue of the well-known `common slot lemma' (see~\cite[(6.16)]{Elman:2008} for example).  For $1$-fold quadratic Pfister forms it is proved in~\cite[Lemma 6]{arason:rel}. 
\begin{lemma}\label{prop:commonslot}
Let  $\pi$ and $\pi'$ be $m$-fold quadratic Pfister forms over $F$ such that for some $c,c'\in F^\times$ we have that $\pff{c}\otimes \pi\simeq \pff{c'}\otimes \pi'$. Then there exists an element $d\in F^\times$ such that $\pff{c}\otimes \pi\simeq \pff{d}\otimes \pi\simeq\pff{d}\otimes \pi'\simeq\pff{c'}\otimes \pi'$.
\end{lemma}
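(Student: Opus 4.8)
The plan is to reduce the statement, by roundness, to a question about represented values, and then to settle that using the subform theorem. \emph{Step 1 (reduction).} Using Lemma~\ref{lemma:round} together with the fact that nonsingular quadratic forms may be cancelled, I would first record that for $e,f\in F^\times$ one has $\pff{e}\otimes\pi\simeq\pff{f}\otimes\pi$ if and only if $e/f$ is represented by $\pi$; in particular the set $D(\pi)$ of non-zero values of $\pi$ is a subgroup of $F^\times$, and similarly for $\pi'$. Put $\gamma=\pff{c}\otimes\pi\simeq\pff{c'}\otimes\pi'$. It then suffices to produce $d\in F^\times$ represented both by $c\pi$ and by $c'\pi'$, since for such a $d$ one gets $\pff{d}\otimes\pi\simeq\pff{c}\otimes\pi=\gamma=\pff{c'}\otimes\pi'\simeq\pff{d}\otimes\pi'$, which is exactly the required chain. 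As $D(c\pi)=cD(\pi)$ and $D(c'\pi')=c'D(\pi')$ are cosets, finding such a $d$ is equivalent to showing $c/c'\in D(\pi)\cdot D(\pi')$. If $\gamma$ is isotropic it is hyperbolic, so $c\pi\simeq\pi$ and $c'\pi'\simeq\pi'$, and $d=1$ works; hence I may assume $\gamma$, and with it $\pi,\pi',c\pi,c'\pi'$, anisotropic. Note moreover that $c'\notin D(\pi')$, for otherwise $\gamma\simeq\pi'\perp\pi'$ would be hyperbolic, so $\pi'\perp\qf{c'}$ is anisotropic over $F$.

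\emph{Step 2 (the subform theorem).} Over $F(\pi)$ the Pfister form $\pi$, hence $\gamma=\pi'\perp c'\pi'$, becomes hyperbolic, so $\pi'$ represents $c'$ over $F(\pi)$; therefore $\pi'\perp\qf{c'}$ is anisotropic over $F$ but isotropic over $F(\pi)$, and by the subform theorem $\pi$ is similar to a subform of $\pi'\perp\qf{c'}$. As $\pi$ is nonsingular this subform splits off, and comparing quasilinear parts gives $\pi'\perp\qf{c'}\simeq s\pi\perp\qf{c'}$ for some $s\in F^\times$. Next, $s\pi$ is a subform of $\pi'\perp\qf{c'}$, which is in turn a subform of $\gamma=\pi'\perp c'\pi'$ (adjoin to $\pi'$ a vector of $c'\pi'$ of value $c'$), so $\gamma$ represents $s$; by roundness $\gamma\simeq s\gamma=s\pi\perp(cs)\pi$. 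Comparing this with $\gamma\simeq\pi'\perp c'\pi'$, adjoining $\qf{c'}$ to both sides, substituting $\pi'\perp\qf{c'}\simeq s\pi\perp\qf{c'}$ on the right, and cancelling the nonsingular form $s\pi$, I would obtain $(cs)\pi\perp\qf{c'}\simeq c'\pi'\perp\qf{c'}$.

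\emph{Step 3 (dimension counts).} In $\pi'\perp\qf{c'}\simeq s\pi\perp\qf{c'}$, the subspaces carrying $\pi'$ and $s\pi$ have dimension $2^m$ inside a space of dimension $2^m+1$, so they meet in dimension at least $2^m-1\geq 1$; the form induced there is anisotropic and is a common subform of $\pi'$ and $s\pi$, whence $D(\pi')\cap sD(\pi)\neq\emptyset$, i.e.\ $s\in D(\pi)D(\pi')$. The same argument applied to $(cs)\pi\perp\qf{c'}\simeq c'\pi'\perp\qf{c'}$ gives $D(c'\pi')\cap(cs)D(\pi)\neq\emptyset$, i.e.\ $cs/c'\in D(\pi)D(\pi')$; since $D(\pi)D(\pi')$ is a subgroup of $F^\times$, we conclude $c/c'=(cs/c')\cdot s^{-1}\in D(\pi)D(\pi')$, as wanted.

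The step I expect to be delicate is Step 2: one must apply the subform theorem not to $\pi'$ (which stays anisotropic over $F(\pi)$) but to the singular form $\pi'\perp\qf{c'}$, and then handle the similarity factor $s$ --- forced on us because $\pi$ is only \emph{similar} to a subform, not isometric to one --- by reinjecting the roundness identity $\gamma\simeq\pff{c}\otimes(s\pi)$. Once the isometries of Step 2 are in hand, the dimension counts of Step 3 and the reduction of Step 1 are routine manipulations of quadratic forms in characteristic~$2$.
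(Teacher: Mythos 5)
Your Step 1 reduction and your final use of roundness are fine, and reducing the lemma to finding a value represented by both $c\pi$ and $c'\pi'$ is exactly the right target (it is also the paper's). The gap is the first sentence of Step 2: the subform theorem does not apply there. In all its versions (e.g.\ \cite[Thm.~22.5]{Elman:2008}, or the multiplicative form invoked in Lemma~\ref{prop:pfisterneigh}) it requires the form under consideration to become \emph{hyperbolic} over $F(\pi)$; you only know that $\pi'\perp\qf{c'}$ becomes \emph{isotropic} there, and being odd-dimensional and singular it can never become hyperbolic. Isotropy over a function field carries essentially no subform information --- that is precisely the phenomenon behind the $F_Q$-minimal forms studied in this paper. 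In fact your intermediate claim is false in general: whenever there exist non-isomorphic quaternion division algebras $Q,Q'$ with $Q\otimes_F Q'$ division and $\pff{\lambda}\otimes n_Q\simeq \pff{\lambda}\otimes n_{Q'}$ anisotropic (the situation of Theorem~\ref{thm:sympclass}(b), which occurs over suitable fields), take $\pi=n_Q$, $\pi'=n_{Q'}$ and $c=c'=\lambda$. The hypothesis of the lemma holds (and its conclusion is trivial), and $\pi'\perp\qf{c'}=n_{Q'}\perp\qf{\lambda}$ is an anisotropic Pfister neighbour of $\pff{\lambda}\otimes n_{Q'}$ which is isotropic over $F(\pi)$; yet no form similar to $n_Q$ is a subform of it, since otherwise $n_{Q'}\perp\qf{\lambda}$ would dominate a form similar to the conic $n_Q^0$, which is impossible: by Proposition~\ref{prop:formtocifford} its even Clifford algebra, Brauer-equivalent to $Q'$, would then contain $(Q,\can)$, forcing $Q\otimes_F Q'$ to be non-division. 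So the isometries you feed into Step 3 are simply not available, and the delicate point you flagged at the end is indeed where the argument breaks.

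The repair is to bypass Step 2 entirely, which is what the paper does: since $\gamma\perp\gamma$ is hyperbolic, one has $\pi\perp\pi'\perp c\pi\perp c'\pi'\simeq 2^{m+2}\times\HH$; adding $c\pi\perp c'\pi'$ to both sides and cancelling nonsingular forms yields $\pi\perp\pi'\simeq c\pi\perp c'\pi'$. The left-hand side is isotropic because $\pi$ and $\pi'$ both represent $1$, so $c\pi$ and $c'\pi'$ represent a common value $d$, and roundness concludes exactly as in your Step 1. No function fields and no subform theorem are needed.
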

\begin{proof}
If one of $\pff{c}\otimes \pi$ or $\pff{c'}\otimes \pi'$ is hyperbolic then they both are and we may take $d=1$.  Therefore we may assume both are anisotropic, and in particular we may assume $\pi$ and $\pi'$ are anisotropic. Consider the hyperbolic quadratic form 
 $$\pff{c}\otimes \pi\perp \pff{c'}\otimes \pi' = \pi\perp \pi' \perp c\pi \perp c'\pi' \simeq (2^{m+2})\times \HH \,.$$ 
 Taking the orthogonal sum of this form and $ c\pi \perp c'\pi '$ and using Witt cancellation gives 
 $$ \pi\perp \pi'\simeq c\pi \perp c'\pi'\,.$$ 
 As $\pi$ and $\pi'$ both represent $1$ the form $ \pi\perp \pi'$ is isotropic, and hence  $ c\pi \perp c'\pi'$  is isotropic. As $\pi$ and $\pi'$ are anisotropic there exists an element $d\in F^\times$ represented by both $ c\pi$ and $c'\pi'$. Therefore there exists an element $s\in F^\times$ represented by $\pi$ and an element $t\in F^\times$ represented by $\pi'$ such that $d=cs=c't$.
 The result then follows from (\ref{lemma:round}).
\end{proof}

\subsection{Algebras with involution} 
\label{AI.sec}
Throughout $A$ denotes a central simple algebra over $F$. 
The index of $A$ is the degree of its division part and the coindex of $A$ is defined by $\coind(A)=\deg(A)/\ind(A)$. 
That is, $A\simeq M_r(D)$, where $r$ is the coindex of $A$ and $D$ is a division algebra Brauer equivalent to $A$. 
All the involutions considered in this paper are $F$-linear. 

If the algebra $A$ is split, that is, $A\simeq \End_F(V)$, an $F$-linear involution on $A$ is the adjoint of a nondegenerate symmetric bilinear form $b:V\times V\rightarrow F$, uniquely defined up to a scalar factor. We denote this algebra with involution by $\Ad_b$. The involution is symplectic if $b$ is alternating, and orthogonal if $b$ is symmetric and non-alternating. 

We use the notations $\Sym(A,\sigma)=\{x\in A,\ \sigma(x)=x\}$ and $\Symd(A,\sigma)=\{\sigma(x)+x,\ x\in A\}$ for the sets of symmetric and symmetrised elements, respectively. Contrary to the case of  fields of  characteristic different from $2$, $\Symd(A,\sigma)$ is a strict subset of $\Sym(A,\sigma)$. More precisely, both are subvector spaces of $A$ of  dimension $\frac{n(n-1)}{2}$ and $\frac{n(n+1)}{2}$ respectively, where $n$ is the degree of $A$. 
One may prove that the involution $\sigma$ is symplectic if and only if all symmetric elements have reduced trace $0$, or equivalently $1$ is a symmetrised element~\cite[(2.5),(2.6)]{Knus:1998}. In particular, in characteristic $2$, a tensor product of involutions with at least one symplectic factor always is symplectic. 

The algebra with involution $(A,\sigma)$ is called isotropic if there exists a non-zero element $x\in A$ such that $\sigma(x)x=0$. 
If $A$ contains an idempotent $e$ such that $\sigma(e)=1-e$, the involution $\sigma$ is called hyperbolic. Assume $A\simeq M_r(D)$ and $\sigma$ is the adjoint of a nondegenerate hermitian form $h$ with respect to $(D,\theta)$ for some $F$-linear involution $\theta$ in $D$. The involution $\sigma$ is isotropic (respectively hyperbolic) if and only if the hermitian form $h$ is isotropic (respectively hyperbolic). For hyperbolicity this is explained in~\cite[(6.7)]{Knus:1998}; for isotropy, one may easily extend to nondegenerate hermitian forms the argument given in~\cite[(3.2)]{dolphin:quadpairs} for nondegenerate symmetric bilinear forms. In particular, if $(A,\sigma)$ is hyperbolic, then the algebra $A$ has even coindex. 

There is a unique nondegenerate alternating bilinear form of a given rank over $F$, up to isomorphism, and this form is hyperbolic~\cite[Prop. 1.8]{Elman:2008}. Therefore, up to isomorphism, there is a unique symplectic involution on a split algebra and it is hyperbolic. Conversely, since a hyperbolic symmetric bilinear form is alternating, non-alternating forms are not hyperbolic, and neither are orthogonal involutions. 
In the non-split case, since nondegenerate hyperbolic forms of the same rank are isomorphic, a central simple algebra of even coindex admits a unique hyperbolic involution up to isomorphism, and this involution is of symplectic type.

An $F$-quaternion algebra is a central simple $F$-algebra of degree $2$. 
Any $F$-quaternion algebra has a basis $(1,u,v,w)$ such that
$$u(1+u) =r, v^2=s\textrm{ and }w=uv=v(1+u)\,$$
for some  $r\in F$ and $s\in F^\times$
 (see  \cite[Chap.~IX, Thm.~26]{Albert:1968}); any such basis is called a quaternion basis.
 Conversely, for  $r\in F$ and $s\in F^\times$
 the above relations uniquely determine an $F$-quaternion algebra, which we denote by $[r,s)$. 

Let $H=[r,s)$ be an $F$-quaternion algebra, with quaternion basis $(1,u,v,w)$. 
By~\cite[(2.21)]{Knus:1998}, the map $H\rightarrow H,$  $x\mapsto \bar x=\Trd_H(x)+x$ is the unique symplectic involution on $H$. 
It is called the canonical involution of $H$, and determined by the conditions that $\overline{u}=1+u$ and $\overline{v}=v$.  
The symmetric elements in $(H,\can)$ are called pure quaternions and we use the notation 
$$H^0=\Sym(H,\can)=F\oplus Fv\oplus Fw.$$ Since the involution is symplectic, we have $\Symd(H,\can)=F\subset H^0$. 
 
An easy computation gives  the following lemma, which will be used in \S~\ref{section:degree4}.  \begin{lemma}\label{prop:quatbasischange}
 Let $H=[r,s)$ be an $F$-quaternion algebra with quaternion basis $(1,u,v,w)$. 
 \begin{enumerate}[$(1)$]
 \item For all $\lambda,\mu\in F$, the quaternion algebra $H$ admits a quaternion basis $(1,u',v',w')$ with $u'=u+\lambda v+\mu w$, and $v'=v$. 
 \item For all $\lambda, \mu \in F$ such that $(\lambda v+\mu w)^2\in F^\times$, the quaternion algebra $H$ admits a quaternion basis $(1,u',v',w')$ with $u'=u$ and $v'=\lambda v+\mu w$. 
 \end{enumerate}
 \end{lemma}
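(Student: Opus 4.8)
The plan is to prove both statements by a direct computation with the multiplication table of $H$ in the basis $(1,u,v,w)$, so the first step is to write that table down. From the defining relations $u(1+u)=r$, $v^2=s$ and $w=uv=v(1+u)$ one extracts $u^2=u+r$ and $vu=w+v$ (so that $uv+vu=v$), then $uw=w+rv$ and $wu=rv$ (so that $uw+wu=w$), and finally $wv=us$ and $vw=us+s$ (so that $vw+wv=s$ and $w^2=rs$). Since the canonical involution satisfies $\overline u=1+u$, $\overline v=v$ and $\overline w=w$, the elements $1,v,w$ have reduced trace $0$ while $\Trd(u)=1$; these trace identities make the degree conditions below transparent.

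For part $(1)$ I would set $u'=u+\lambda v+\mu w$, $v'=v$ and $w'=u'v'$, and then verify the three conditions defining a quaternion basis: that $u'(1+u')\in F$, which is immediate since $\Trd(u')=\Trd(u)=1$; that $v'^{2}=s\in F^\times$, which is clear; and that $w'=v'(1+u')$, equivalently $u'v'+v'u'=v'$, which I would check by expanding the left-hand side in the basis $(1,u,v,w)$ and simplifying with the products recorded above. Linear independence of $(1,u',v',w')$ would then follow, and one reads off the resulting presentation $H=[u'(1+u'),s)$.

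For part $(2)$ I would set $u'=u$, $v'=\lambda v+\mu w$ and $w'=u'v'$, and verify the same three conditions. Here $\Trd(v')=0$ is clear, and $v'^{2}=(\lambda v+\mu w)^{2}=s(\lambda^{2}+\lambda\mu+\mu^{2}r)$, so the hypothesis $(\lambda v+\mu w)^{2}\in F^\times$ is exactly what guarantees $v'^{2}\in F^\times$; since $s\neq 0$ this same nonvanishing also gives the linear independence of $v'$ and $w'=(\lambda+\mu)w+\mu r v$ over $Fv\oplus Fw$, and hence of $(1,u',v',w')$. The remaining relation is then automatic, since $u'v'+v'u'=\lambda(uv+vu)+\mu(uw+wu)=\lambda v+\mu w=v'$ by the commutator identities above, so $w'=v'(1+u')$ and $(1,u',v',w')$ is a quaternion basis, presenting $H$ as $[r,v'^{2})$.

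The only real obstacle is bookkeeping with the characteristic-$2$ multiplication rules: in contrast to the case of characteristic different from $2$, here $v$ and $w$ do not anticommute, and $u$ does not anticommute with $v$ or $w$ either --- instead $uv+vu=v$, $uw+wu=w$ and $vw+wv=s$ --- so one must keep track of the order of factors at every step. Once the table above is written down, both verifications are entirely mechanical.
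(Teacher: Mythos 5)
Your multiplication table is correct, and your treatment of part $(2)$ is complete: there $u'v'+v'u'=\lambda(uv+vu)+\mu(uw+wu)=\lambda v+\mu w=v'$, and the hypothesis $(\lambda v+\mu w)^2\in F^\times$ is exactly what is needed both for $v'^2\in F^\times$ and for the linear independence of $(1,u',v',w')$. The paper offers no argument beyond ``an easy computation,'' and the computation you set up is the intended one.

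The gap is in part $(1)$, where you assert that the relation $u'v'+v'u'=v'$ ``would be checked by expanding'' but never display the expansion. Carrying it out with your own table gives
$$u'v+vu'=(uv+vu)+\lambda(v^2+v^2)+\mu(wv+vw)=v+\mu s,$$
which equals $v$ only when $\mu=0$; equivalently, $u'v'=w+\lambda s+\mu us$ while $v'(1+u')=w+\lambda s+\mu us+\mu s$. So for $\mu\neq 0$ the triple $(1,u',v,u'v)$ is \emph{not} a quaternion basis: an element added to $u$ can only be kept alongside the unchanged $v$ if it centralises $v$, i.e.\ lies in $F\oplus Fv$, and $\mu w$ does not. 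The verification you promise therefore fails, and the statement cannot be established with $v'=v$ literally. The computation does go through once $v$ is adjusted as well, e.g.\ $v'=v+\mu s$ satisfies $u'v'+v'u'=v+\mu s=v'$ and $v'^2=s(1+\mu^2s)$, which lies in $F^\times$ whenever $H$ is not split (the only situation in which the lemma is invoked in the proof of Lemma~\ref{lemma:big}, after reduction to the case where both quaternion algebras are division). Your write-up needs either to carry out this corrected computation or to record explicitly that the naive choice $v'=v$ does not satisfy the defining relations.
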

 
 The reduced norm of $H$ defines a nonsingular $4$-dimensional quadratic form on the $F$-vector space $H$, which is denoted by $n_H$. 
Computing this form with the  quaternion basis associated to the representation $H=[r,s)$, one gets  $n_H=\pfr{r,s}=[1,s]\perp r[1,s]$.  
The restriction of the norm form to pure quaternions leads to a nondegenerate conic, for which we use the notation 
$n_H^0=\qf{1}\perp s[1,r]$. This form is similar to $[1,r]\perp\qf{s}$. Its function field is denoted by $F_H$. 

Throughout the paper, $Q$ is a fixed quaternion algebra, and we let $Q=[a,b)$ for some $a\in F$ and $b\in F^\times$. We assume in addition that $Q$ is division, so that its norm form $n_Q=\pfr{a,b}$ is anisotropic. The field $F_Q$ is the function field of the conic $n_Q^0$, which is similar to $[1,a]\perp\qf{b}$. By Amitsur's theorem, an $F$-quaternion algebra is split over $F_Q$ if and only if it is either split or isomorphic to $Q$, see~\cite[Remark 5.4.9]{Gille:2006}. 
Moreover, we have: 
\begin{lemma} 
\label{prop:pfisterneigh}
An anisotropic quadratic form $\varphi$ of even dimension is hyperbolic over $F_Q$ if and only if there exists a symmetric bilinear form $b$ such that $\varphi\simeq b\otimes n_Q$. 
\end{lemma}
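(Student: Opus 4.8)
The converse implication is immediate: if $\varphi\simeq b\otimes n_Q$ then, since $Q$ splits over $F_Q$ by Amitsur's theorem, the $2$-fold quadratic Pfister form $n_Q$ becomes hyperbolic over $F_Q$, and the tensor product of a symmetric bilinear form with a hyperbolic quadratic form is hyperbolic; hence $\varphi_{F_Q}\simeq b_{F_Q}\otimes(n_Q)_{F_Q}$ is hyperbolic.

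For the direct implication I would argue by induction on $\dim\varphi$, peeling off copies of $n_Q$. Assume $\varphi$ is anisotropic of even dimension with $\varphi_{F_Q}$ hyperbolic; then $\varphi$ is necessarily nonsingular, since the radical of its polar form is insensitive to scalar extension and a form with nontrivial such radical cannot become hyperbolic. The case $\varphi=0$ is trivial. In general, $\varphi_{F_Q}$ is in particular isotropic, and $F_Q$ is the function field of the anisotropic conic $n_Q^0$; by the characteristic $2$ subform theorem (\cite[Prop.~22.4 and Thm.~22.5]{Elman:2008}), together with the fact that $n_Q^0$ is the ($3$-dimensional) Pfister neighbour of $n_Q$, the form $\varphi$ contains an isometric copy of $\lambda n_Q$ for some $\lambda\in F^\times$. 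Since $n_Q$ is nonsingular, \cite[(7.10)]{Elman:2008} makes this a genuine subform: $\varphi\simeq\lambda n_Q\perp\varphi_1$ with $\varphi_1$ nonsingular, anisotropic, of strictly smaller even dimension. As $(\lambda n_Q)_{F_Q}$ is hyperbolic and nonsingular forms can be cancelled (\cite[(8.4)]{Elman:2008}), $(\varphi_1)_{F_Q}$ is hyperbolic; by the induction hypothesis $\varphi_1\simeq b_1\otimes n_Q$, so $\varphi\simeq\lambda n_Q\perp(b_1\otimes n_Q)=(\qf{\lambda}^{bi}\perp b_1)\otimes n_Q$, which completes the induction.

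The delicate step is the claim that $\varphi$ contains $\lambda n_Q$, rather than merely the singular neighbour $n_Q^0$, and this is where characteristic $2$ genuinely intervenes. In characteristic different from $2$ one invokes directly that $F(n_Q^0)$ and $F(n_Q)$ have the same Witt kernel; in characteristic $2$ the subform theorem naturally delivers only $n_Q^0$ \emph{dominated by} $\varphi$ (not as an orthogonal summand), and since a totally singular line can be completed to a nonsingular plane in several non-isometric ways, mere isotropy of $\varphi$ over $F_Q$ does not pin down the correct four-dimensional completion — one must use that $\varphi$ is \emph{hyperbolic} over $F_Q$. I would handle this either by examining the nonsingular hull of $\lambda n_Q^0$ inside $\varphi$ and using hyperbolicity to force it to be $\simeq\lambda n_Q$, or by bypassing $n_Q^0$ altogether: choose a rational subfield $F(t)\subseteq F_Q$ over which $F_Q$ is a \emph{separable} quadratic extension $F(t)[\wp^{-1}(\delta)]$ — obtained by projecting the conic from a point that is neither on it nor its strange point — apply the Scharlau transfer to obtain $\varphi_{F(t)}\simeq b'\otimes[1,\delta]$, and descend this divisibility to $F$ by specialising $t$, recovering $n_Q$. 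This identification/descent is where I expect the real work to lie; the remainder is the formal induction above.
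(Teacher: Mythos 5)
Your converse direction and the induction skeleton are fine, and you have correctly located the one nontrivial point: upgrading the output of the subform theorem applied to the $3$-dimensional singular neighbour $n_Q^0$ to the statement that $\varphi$ contains $\lambda n_Q$ as an orthogonal summand. But at precisely that point your argument stops. You offer two possible strategies (analysing the nonsingular hull of $\lambda n_Q^0$ inside $\varphi$, or a Scharlau transfer over a rational subfield of $F_Q$ followed by a specialisation/descent) and carry out neither, explicitly deferring ``the real work''. As written this is a genuine gap rather than a complete proof: neither sketch is routine, and the transfer-and-specialisation route in particular would need substantial justification.

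The gap can be closed without any of that machinery, and this is essentially what the paper does. Since $n_Q^0$ is a Pfister neighbour of $n_Q$, each of the two forms becomes isotropic over the function field of the other, so by~\cite[(22.17)]{Elman:2008} an anisotropic form is hyperbolic over $F_Q=F(n_Q^0)$ if and only if it is hyperbolic over $F(n_Q)$. Once over $F(n_Q)$, the multiplicative subform theorem~\cite[(23.6)]{Elman:2008} for the anisotropic quadratic Pfister form $n_Q$ gives $\varphi\simeq b\otimes n_Q$ in one step; it already packages your induction. Alternatively, if you prefer to keep your induction, the same reduction lets you apply the subform theorem to the nonsingular $4$-dimensional form $n_Q$ instead of to the singular conic: this yields $\lambda n_Q$ dominated by $\varphi$, hence, $n_Q$ being nonsingular, a genuine subform by~\cite[(7.10)]{Elman:2008}, and your inductive step then goes through verbatim. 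Either way, the delicate ``completion of $n_Q^0$ to $n_Q$ inside $\varphi$'' that you flag is avoided entirely by changing the function field before invoking the subform theorem.
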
 
\begin{proof}
Applying~\cite[(22.17)]{Elman:2008} several times if necessary, one may check that $\varphi$ is hyperbolic over $F_Q$ if and only if it is hyperbolic over the function field $F(n_Q)$ of the norm form of $Q$. Hence, the lemma is an immediate consequence of the multiplicative subform theorem~\cite[(23.6)]{Elman:2008}. 
\end{proof}

Let $(A,\sigma)$ be a central simple algebra, endowed with an involution of symplectic type. 
We say that $(A,\sigma)$ contains $(Q,\can)$ if $A$ contains a $\sigma$-stable subalgebra isomorphic to $Q$ on which $\sigma$ acts as the canonical involution of $Q$. If this is the case, considering the centraliser $B$ of this subalgebra in $A$, we get a decomposition $$(A,\sigma)\simeq(Q,\can)\otimes (B,\tau).$$ 
The quaternion algebra $Q$ is split by $F_Q$, and the canonical involution is symplectic; therefore $(Q,\can)_{F_Q}$ is hyperbolic and it follows that any $(A,\sigma)$ containing $(Q,\can)$ is also hyperbolic. 
The converse does not hold in general, as we shall explain in \S\,\ref{symp.section}. 

\subsection{Quadratic Pairs} The basic results on quadratic pairs that we recall here can be found in  \cite[\S 5]{Knus:1998}. 
In arbitrary characteristic, algebraic groups of type $D$ can be described in terms of quadratic pairs. 
A quadratic pair on a central simple algebra $A$ is a couple $(\sigma,f)$, where $\sigma$ is an $F$-linear involution on $A$, with $\Sym(A,\sigma)$ of dimension $\frac{n(n+1)}{2}$, and $f$ is a so-called semi-trace on $(A,\sigma)$, that is an $F$-linear map $f:\,\Sym(A,\sigma)\ra F$ such that $f(x+\sigma(x))=\Trd_A(x)$ for all $x\in A$. In characteristic different from $2$, the dimension condition guarantees that the involution is of orthogonal type, and one may check that there is a unique semi-trace on $(A,\sigma)$ given  by $f(x)=\frac 12 \Trd_A(x)$ for all $x\in A$. Therefore quadratic pairs and orthogonal involutions are equivalent notions when the  characteristic is not $2$. 
Conversely, in characteristic $2$, the existence of a semi-trace implies $\sigma$ is symplectic. Indeed, it implies that the reduced trace vanishes on $\Sym(A,\sigma)$, since $\Trd_A(c)=f(c+\sigma(c))=f(2c)=0$ for all $c\in \Sym(A,\sigma)$. 

Let $(V,\rho)$ be a nonsingular quadratic space over the field $F$. The polar form $b_\rho$ of $\rho$ induces a symplectic (and hyperbolic) involution $\ad_{b_\rho}$ on $A=\End_F(V)$, and one may prove that $(\End_F(V),\ad_{b_\rho})\simeq (V\otimes V,\varepsilon)$, where $\varepsilon$ is the exchange involution, defined by $\varepsilon(x\otimes y)=y\otimes x$. Moreover, there exists a unique semi-trace $f$ defined on $\Sym(V\otimes V, \varepsilon)$ and satisfying $f(x\otimes x)=\rho(x)$ for all $x\in V$. 
Under the isomorphism above, $f$ defines a semi-trace $f_\rho$ on $\Sym(\End_F(V),\ad_{b_\rho})$. The quadratic pair $(\ad_{b_\rho},f_\rho)$ is called the adjoint of $\rho$, and we use the notation $\Ad_{\rho}$ for the algebra with quadratic pair $(\End_F(V),\ad_{b_\rho},f_\rho)$. As explained in~\cite[(5.11)]{Knus:1998}, any quadratic pair on a split algebra $\End_F(V)$ is the adjoint of a nonsingular quadratic form $\rho$ on $V$. 

The notions of isotropy and hyperbolicity of quadratic forms extend to quadratic pairs; see~\cite[(6.5),(6.12)]{Knus:1998} for the definitions. 

Let $(B,\tau)$ be an algebra with involution, and $(A,\sigma,f)$ an algebra with quadratic pair. 
Since $\sigma$ is symplectic, the involution $\tau\otimes \sigma$ also is symplectic. 
Moreover, there exists a unique semi-trace  $f_\star$ on $\Sym(B\otimes A,\tau\otimes \sigma)$ satisfying $$f_\star(b\otimes a)=\Trd_B(b)\otimes f(a)\mbox{ for all }b\in\Sym(B,\tau)\mbox{ and }a\in\Sym(A,\sigma)\mbox{~\cite[(5.18)]{Knus:1998}.}$$
This defines a tensor product of $(B,\tau)$ and $(A,\sigma, f)$ giving $(B\otimes A,\tau\otimes \sigma,f_\star)$, which we denote by $(B,\tau)\otimes(A,\sigma,f)$, and one may check it corresponds to the usual tensor product in the split case, that is $\Ad_b\otimes \Ad_\rho=\Ad_{b\otimes\rho}$ for all nondegenerate symmetric bilinear forms $b$ and nonsingular quadratic forms $\rho$, see \cite[(5.19)]{Knus:1998}. 
 By \cite[(5.3)]{dolphin:totdecomp}, the tensor product of algebras with involution and the tensor product of an algebra with involution and an algebra with quadratic pair are mutually associative. In particular, for an $F$-algebra with quadratic pair $(A,\s,f)$ and $F$-algebras with involution of the first kind $(B,\tau)$ and $(C,\gamma)$, we may write $(C,\gamma)\otimes(B,\tau)\otimes(A,\s,f)$ without any ambiguity.

If $b$ is an alternating, hence  hyperbolic, bilinear form, then $b\otimes \rho$ is a hyperbolic quadratic form and up to isomorphism only  depends on $\dim(\rho)$. Similarly, one may check that $f_\star$ does not depend on $f$ if $\tau$ is symplectic. Indeed, if $\tau$ is symplectic, then $\Trd_B(b)=0$ for all $b\in\Sym(B,\tau)$ and $f_\star$ is the unique semi-trace on $(B\otimes A,\tau\otimes \sigma)$ such that $f_\star(b\otimes a)=0$ for all $b\in \Sym(B,\tau)$ and $a\in\Sym(A,\sigma)$. We call this semi-trace the  canonical semi-trace  on $(B,\tau)\otimes(A,\sigma)$ and denote 
the resulting $F$-algebra with quadratic pair by $(B,\tau)\boxtimes(A,\sigma)$. We get the  following result  (see also \cite[(5.4)]{dolphin:totdecomp}).

\begin{lemma}
\label{decqp.lem} 
Given two algebras with symplectic involution $(A,\sigma)$ and $(B,\tau)$, we have 
$$(A,\sigma)\boxtimes (B,\tau)\simeq (A,\sigma)\otimes (B,\tau,g)\simeq (B,\tau)\otimes (A,\sigma,f) \simeq (B,\tau)\boxtimes(A,\sigma)\,,$$
for all semi-traces $f$ on $(A,\sigma)$ and $g$ on $(B,\tau)$. \end{lemma}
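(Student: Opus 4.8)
The plan is to notice first that, in the statement, the first and third relations are actually equalities, not merely isomorphisms, so only one genuine isomorphism remains to be produced. Since $\sigma$ is symplectic, $\Trd_A$ vanishes on $\Sym(A,\sigma)$; hence the identity $g_\star(a\otimes b)=\Trd_A(a)\otimes g(b)$ (for $a\in\Sym(A,\sigma)$, $b\in\Sym(B,\tau)$) that characterises the semi-trace $g_\star$ of $(A,\sigma)\otimes(B,\tau,g)$ reads $g_\star(a\otimes b)=0$. By the uniqueness part of the construction recalled before the statement, this forces $g_\star$ to be the canonical semi-trace, so $(A,\sigma)\otimes(B,\tau,g)=(A,\sigma)\boxtimes(B,\tau)$ for every semi-trace $g$ on $(B,\tau)$. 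Symmetrically, as $\tau$ is symplectic, $(B,\tau)\otimes(A,\sigma,f)=(B,\tau)\boxtimes(A,\sigma)$ for every semi-trace $f$ on $(A,\sigma)$. It therefore suffices to prove $(A,\sigma)\boxtimes(B,\tau)\simeq(B,\tau)\boxtimes(A,\sigma)$, i.e.\ that $\boxtimes$ is commutative up to isomorphism.

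For this I would use the switch map $s\colon A\otimes B\to B\otimes A$, $a\otimes b\mapsto b\otimes a$. It is an isomorphism of $F$-algebras, it preserves reduced traces (being an $F$-algebra isomorphism of central simple algebras), and it satisfies $s\circ(\sigma\otimes\tau)=(\tau\otimes\sigma)\circ s$, so it is an isomorphism of algebras with involution from $(A\otimes B,\sigma\otimes\tau)$ to $(B\otimes A,\tau\otimes\sigma)$. Writing $f^{AB}$ and $f^{BA}$ for the canonical semi-traces on the two sides, one checks that $f^{BA}\circ s$ is again a semi-trace on $(A\otimes B,\sigma\otimes\tau)$: for $x\in A\otimes B$,
$$(f^{BA}\circ s)\bigl(x+(\sigma\otimes\tau)(x)\bigr)=f^{BA}\bigl(s(x)+(\tau\otimes\sigma)(s(x))\bigr)=\Trd_{B\otimes A}(s(x))=\Trd_{A\otimes B}(x).$$
Moreover $f^{BA}\circ s$ vanishes on every decomposable symmetric tensor $a\otimes b$ with $a\in\Sym(A,\sigma)$ and $b\in\Sym(B,\tau)$, since $s(a\otimes b)=b\otimes a$ and $f^{BA}(b\otimes a)=0$. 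By uniqueness of the canonical semi-trace, $f^{BA}\circ s=f^{AB}$, so $s$ is an isomorphism $(A,\sigma)\boxtimes(B,\tau)\simeq(B,\tau)\boxtimes(A,\sigma)$. Combined with the two equalities of the previous paragraph, this yields all the asserted isomorphisms.

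The only ingredient that is not purely formal is the appeal to uniqueness of the canonical semi-trace, namely the fact that a semi-trace on the tensor product of two symplectic algebras with involution is pinned down by its (vanishing) values on the decomposable symmetric tensors $a\otimes b$ — equivalently, that $\Symd(A\otimes B,\sigma\otimes\tau)$ together with the image of $\Sym(A,\sigma)\otimes\Sym(B,\tau)$ spans $\Sym(A\otimes B,\sigma\otimes\tau)$. This is exactly the content of the existence-and-uniqueness statement for $f_\star$ recalled above (\cite[(5.18)]{Knus:1998}), so no new work is needed; everything else amounts to tracking reduced traces and involutions through the switch isomorphism.
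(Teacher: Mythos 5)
Your proof is correct and follows essentially the same route as the paper, which states the lemma without a separate proof because it is exactly the content of the discussion immediately preceding it: when both involutions are symplectic, each induced semi-trace vanishes on decomposable symmetric tensors and hence coincides, by the uniqueness in \cite[(5.18)]{Knus:1998}, with the canonical one, while the switch map supplies the remaining isomorphism (the paper delegates the details to \cite[(5.4)]{dolphin:totdecomp}). Your write-up simply makes these steps explicit.
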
 
Moreover, for any algebra with involution of the first kind $(C,\gamma)$ we have 
$$\bigl((C,\gamma)\otimes (B,\tau)\bigr)\boxtimes (A,\sigma)\simeq (C,\gamma)\otimes(B,\tau)\otimes(A,\sigma,f)\simeq (C,\gamma)\otimes\bigl((B,\tau)\boxtimes(A,\sigma)\bigr)\,.$$ Therefore, we will use the notation $(C,\gamma)\otimes(B,\tau)\boxtimes (A,\sigma)$ for this tensor product. 
If $(A,\sigma)$ and $(B,\tau)$ are isomorphic to $(Q,\can)$, then the $F$-algebra with  quadratic pair $(A,\sigma)\boxtimes(B,\tau)$ is the adjoint of the norm form $n_Q$, as we now prove.
\begin{lemma}
\label{decAdnQ.lem}
$(Q,\can)\boxtimes(Q,\can)\simeq \Ad_{n_Q},$ 
where $n_Q$ is the norm form of $Q$.  
\end{lemma}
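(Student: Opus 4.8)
The plan is to realise both $F$-algebras with quadratic pair inside $\End_F(Q)$, matching first the underlying involutions and then the semi-traces.

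\emph{Step 1 (the algebra and the involution).} The canonical involution makes $Q$ isomorphic to its opposite algebra, so $Q\otimes_F Q$ is split of degree $4$; concretely I would use the $F$-algebra isomorphism $\Phi\colon Q\otimes_F Q\to\End_F(Q)$, $\Phi(x\otimes y)(z)=xz\,\overline{y}$, which is the classical ``sandwich'' isomorphism $Q\otimes Q^{\mathrm{op}}\simeq\End_F(Q)$ composed with the identification $Q^{\mathrm{op}}\simeq Q$, $y^{\mathrm{op}}\mapsto\overline{y}$. Let $b$ be the polar form of $n_Q$, so $b(x,y)=\Trd_Q(\overline{x}y)$ and $\ad_b$ is the (symplectic) involution underlying $\Ad_{n_Q}$. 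Using $\overline{xy}=\overline{y}\,\overline{x}$ and the invariance of $\Trd_Q$ under cyclic permutations, one checks $b(\Phi(x\otimes y)(z),t)=\Trd_Q(\overline{xz\overline y}\,t)=\Trd_Q(\overline z\,\overline x\,t\,y)=b(z,\Phi(\overline x\otimes\overline y)(t))$, i.e. $\Phi$ carries $\can\otimes\can$ to $\ad_b$. Hence $\Phi$ is an isomorphism of algebras with involution $(Q\otimes Q,\can\otimes\can)\xrightarrow{\sim}(\End_F(Q),\ad_b)$.

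\emph{Step 2 (the semi-trace).} Pulling the quadratic pair of $\Ad_{n_Q}$ back along $\Phi$ gives a quadratic pair $(\can\otimes\can,\,f_{n_Q}\circ\Phi)$ on $Q\otimes Q$. On the other hand $(Q,\can)\boxtimes(Q,\can)$ is $(Q\otimes Q,\can\otimes\can)$ equipped with its canonical semi-trace, which is the \emph{unique} semi-trace vanishing on all $x\otimes y$ with $x,y\in\Sym(Q,\can)=Q^0$. So it is enough to show $f_{n_Q}(\Phi(x\otimes y))=0$ for all pure quaternions $x,y$; by $F$-linearity of $f_{n_Q}$ on symmetric elements this reduces to the nine cases $x,y\in\{1,v,w\}$, where $(1,u,v,w)$ is a quaternion basis of $Q=[a,b)$. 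Each such $\Phi(x\otimes y)$ is the endomorphism $z\mapsto xz\,\overline y$ of $Q$; writing it in these coordinates and using that $f_{n_Q}$ is linear on $\Sym(\End_F(Q),\ad_b)$, equals $n_Q(\xi)$ on the rank-one symmetric endomorphism $z\mapsto b(\xi,z)\xi$, and equals $\Trd_{\End_F(Q)}(h)$ on $h+\ad_b(h)$, one evaluates all nine values and obtains $0$. This identifies $f_{n_Q}\circ\Phi$ with the canonical semi-trace, so $\Phi$ upgrades to an isomorphism $(Q,\can)\boxtimes(Q,\can)\simeq\Ad_{n_Q}$.

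\emph{Main obstacle.} Step 1 is routine. The work is in Step 2: one must reconcile two different descriptions of $\End_F(Q)$ as a ``tensor square of $Q$'' — the sandwich description $\Phi$ governing the involution, and the $V\otimes V$ description built into the adjoint quadratic pair and governing $f_{n_Q}$ — and carry out the characteristic-$2$ semi-trace bookkeeping without trace slips. A more conceptual but less self-contained alternative is to note that $(Q,\can)\boxtimes(Q,\can)\simeq\Ad_\rho$ for some $4$-dimensional nonsingular $\rho$, unique up to a scalar factor, and then pin down $\rho$ by computing the discriminant (trivial, so $\rho$ is a $2$-fold quadratic Pfister form) and the Clifford algebra ($\simeq Q\times Q$) of this box product, forcing $\rho\simeq n_Q$; this, however, presupposes those invariants of box products rather than the explicit $\Phi$.
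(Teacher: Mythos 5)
Your argument is correct, and it takes a genuinely different route from the paper. The paper's proof outsources the whole identification to \cite[Exercise~22, Chap.~I]{Knus:1998}: that exercise says $(Q,\can)\boxtimes(Q,\can)$ is the adjoint of the explicit quadratic form $\rho(x)=\Trd_Q(\bar x \ell x)$ on $Q$ for any $\ell$ with $\Trd_Q(\ell)=1$, after which the lemma reduces to the one-line identity $\Trd_Q(\bar x u x)=\Trd_Q(ux\bar x)=n_Q(x)$. You instead rebuild that exercise from scratch: the sandwich isomorphism $\Phi$ correctly carries $\can\otimes\can$ to $\ad_{b_{n_Q}}$ (your Step~1 computation is right, since $b_{n_Q}(x,y)=\Trd_Q(\bar xy)$), and Step~2 is a legitimate way to pin down the semi-trace, because $\Sym(Q\otimes Q,\can\otimes\can)$ is spanned by $\Symd$ together with $Q^0\otimes Q^0$, so a semi-trace vanishing on the nine products $\Phi(x\otimes y)$, $x,y\in\{1,v,w\}$, must be the canonical one. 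The one caveat is that you assert rather than carry out those nine evaluations; they do all come out to $0$ (e.g.\ $f_{n_Q}(\id)=b_{n_Q}(1,u)+b_{n_Q}(v,b^{-1}w)=1+1=0$ using the symplectic basis $(1,u;v,b^{-1}w)$, and $f_{n_Q}(\Phi(v\otimes 1))=b_{n_Q}(v,u)+b_{n_Q}(v,1)+b_{n_Q}(w,1)=0$ after writing $L_v=\sum_i L_v(e_i)\otimes e_i^\ast$), but they are the entire content of the proof in your version, so they should be written out. What your approach buys is self-containedness (no reliance on the BOI exercise) at the cost of a page of characteristic-$2$ bookkeeping; your ``conceptual alternative'' via the Clifford algebra $C(\Ad_\rho)\simeq Q\times Q$ also works but leans on \cite[(15.12)]{Knus:1998}, which is heavier machinery than what the paper uses.
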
 
\begin{proof}
It only remains to check the second isomorphism, which follows from~\cite[Exercise 22, Chap I]{Knus:1998}. 
Indeed, consider a quaternion basis $(1,u,v,w)$ of $Q$; we have $\bar u=1+u$. Therefore 
$(Q,\can)\boxtimes(Q,\can)$ is isomorphic to $\End_F(Q)$, endowed with the quadratic pair adjoint to the quadratic form $\rho$ defined by $\rho(x)=\Trd_Q(\bar x u x)$. Since $\Trd_Q(\bar x u x)=\Trd_Q(ux\bar x)=\Trd_Q(un_Q(x))=n_Q(x)$, we get $\rho={n_Q}$ as required. 
\end{proof} 

Let $(A,\sigma,f)$ be an algebra with quadratic pair. We say that $(A,\sigma,f)$ contains $(Q,\can)$ if there exists an algebra with symplectic involution $(B,\tau)$ such that $(A,\sigma,f)\simeq(Q,\can)\boxtimes(B,\tau)$. 
By Lemma \ref{decqp.lem}, this is equivalent to 
 $(A,\sigma,f)\simeq (Q,\can)\otimes(B,\tau,g)$ for any choice of semi-trace $g$ on $(B,\tau)$. 
 This condition is stronger than the existence of a $\sigma$-stable subalgebra in $A$, isomorphic to $(Q,\can)$. Indeed, consider any non-singular $4$-dimensional quadratic form $\rho$ with non-trivial discriminant. By~\cite[(7.10)]{Knus:1998}, $\Ad_\rho$ is indecomposable. Nevertheless, the underlying algebra with involution is $(M_4(F),\ad_{2\HH^{bi}})\simeq (Q,\can)\otimes (Q,\can)$ as an algebra with involution. With our definition, this $\Ad_\rho$ does not contain $(Q,\can)$. 
Since $(Q,\can)$ is hyperbolic over $F_Q$, if $(A,\sigma,f)$ contains  $(Q,\can)$  then it is also hyperbolic over $F_Q$ by \cite[A.5]{Tignol:galcohomgps}. As we shall see in~\S\,\ref{sec:qp}, the converse does hold in degree $4$ for anisotropic quadratic pairs.

\section{Symplectic involutions and minimal forms}
\label{symp.section}

In this section we give a complete description of the $(A,\sigma)$ that are hyperbolic over $F_Q$ if either $A$ is split by $Q$ or $A$ has degree $4$; as we will explain, they do not necessarily contain $(Q,\can)$. In \S~\ref{sec:minform}, we describe the relation with $F_Q$-minimal quadratic forms. 

\subsection{Split and hyperbolic over $F_Q$}

\label{sec:splithyp}

Let us first assume that $(A,\sigma)$ is both split and hyperbolic over $F_Q$. 
As recalled in \S\,\ref{AI.sec}, by Amitsur's theorem~\cite[\S 5.4]{Gille:2006}, this implies $A$ is either split or Brauer equivalent to $Q$. Up to isomorphism, a split algebra admits a unique symplectic involution, namely the hyperbolic involution. Hence if $A_{F_Q}$ is split then $(A,\sigma)_{F_Q}$ is split and hyperbolic, with no additional condition on the  involution. That is, an algebra with symplectic involution $(A,\sigma)$ is split and hyperbolic over $F_Q$ if and only if the algebra is either split or Brauer equivalent to $Q$. We now prove: 

\begin{prop}\label{prop:splithyp}
 Let $(A,\sigma)$ be an $F$-algebra with symplectic involution, such that $A\otimes_F F_Q$ is split. 
  \begin{enumerate}[$(1)$]
 \item If $A$ is split then $(A,\sigma)$ is hyperbolic. Moreover, $(A,\sigma)$ contains $(Q,\can)$ if and only if $\deg A \equiv 0 \mod 4$. 
  \item If $A$ is Brauer-equivalent to $Q$, then there exists a symmetric bilinear form $b$ over $F$ such that $(A,\sigma)\simeq \Ad_b\otimes (Q,\can)$. In particular, $(A,\sigma)$ contains $(Q,\can)$ and $(A,\sigma)_{F_Q}$ is hyperbolic. 
 \end{enumerate}
\end{prop}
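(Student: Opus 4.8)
The proof naturally splits along the two cases of the statement, and in each case the key is to produce an explicit decomposition of $(A,\sigma)$ witnessing hyperbolicity and, when appropriate, containment of $(Q,\can)$.

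\medskip

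For part (1), assume $A$ is split, say $A \simeq \End_F(V)$. As recalled in \S\ref{AI.sec}, there is a unique symplectic involution on a split algebra up to isomorphism, and it is hyperbolic; so $(A,\sigma)$ is hyperbolic, and in particular $\deg A = \dim V$ is even. For the containment statement, first suppose $\deg A \equiv 0 \bmod 4$. Write $n = \deg A = 4m$. The hyperbolic symplectic involution is the adjoint of a nondegenerate alternating bilinear form $b$ of rank $4m$, which by~\cite[Prop.~1.8]{Elman:2008} is hyperbolic; we may write $b \simeq b' \otimes \HH^{bi}$ for a suitable symmetric bilinear form $b'$ of rank $m$, or more directly observe that $\HH^{bi}$ is (adjoint to) the canonical involution on $Q_{F_Q}$ — but working over $F$, I would instead use that the split algebra $M_4(F)$ with its symplectic involution is isomorphic to $(Q,\can)\otimes(Q,\can)$, cf.~Lemma~\ref{decAdnQ.lem}, since that tensor product is split (as $Q\otimes Q$ is split) and symplectic, hence carries the hyperbolic symplectic involution. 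Thus for $n = 4m$,
$$(A,\sigma) \simeq \Ad_b \otimes (Q,\can)\otimes(Q,\can)$$
for an appropriate symmetric bilinear form $b$ of rank $m$; setting $(B,\tau) = \Ad_b \otimes (Q,\can)$ exhibits $(A,\sigma)$ as $(Q,\can)\otimes(B,\tau)$, so it contains $(Q,\can)$. Conversely, if $(A,\sigma)$ contains $(Q,\can)$, then $(A,\sigma)\simeq(Q,\can)\otimes(B,\tau)$ with $\deg B = \deg A/2$; since $(Q,\can)$ is hyperbolic and $(B,\tau)$ must be symplectic (a tensor product with a symplectic factor is symplectic, \S\ref{AI.sec}), and as $B$ is split (being Brauer-equivalent to $A\otimes Q \sim Q$, but $A$ split forces $B \sim Q$)... here I need to be careful: if $A$ is split then $B$ is Brauer-equivalent to $Q$, which is division, so $\deg B$ is even, i.e.\ $\deg A \equiv 0 \bmod 4$. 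That direction is the cleaner one.

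\medskip

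For part (2), suppose $A$ is Brauer-equivalent to $Q$, so $A \simeq M_r(Q)$ with $r = \coind A$. Then $\sigma$ is the adjoint of a nondegenerate hermitian form $h$ over $(Q,\can)$, the canonical involution being the unique symplectic involution on $Q$. By the classification of hermitian forms over $(Q,\can)$ in characteristic $2$ — concretely, since $(Q,\can)$ is a quaternion algebra with its canonical (symplectic) involution, hermitian forms over it correspond, via the standard Morita-type equivalence, to symmetric bilinear forms over $F$ (the transfer along the canonical involution) — there is a symmetric bilinear form $b$ over $F$ with $(A,\sigma)\simeq \Ad_b \otimes (Q,\can)$. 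This directly shows $(A,\sigma)$ contains $(Q,\can)$, taking $(B,\tau) = \Ad_b$ (orthogonal or symplectic — either is allowed since we only need a $\sigma$-stable copy of $Q$ with the canonical involution), and hyperbolicity over $F_Q$ follows because $(Q,\can)_{F_Q}$ is hyperbolic (as recalled at the end of \S\ref{AI.sec}), hence so is any $\Ad_b \otimes (Q,\can)$ over $F_Q$.

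\medskip

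**Main obstacle.** The crux is the classification of hermitian forms over $(Q,\can)$ in part (2): establishing that every nondegenerate hermitian form $h$ over the quaternion algebra with its canonical involution is isometric to one diagonalizable over $F$, i.e.\ of the form $b \otimes \can$ for a symmetric bilinear form $b$. In characteristic $2$ this requires some care because one cannot diagonalize arbitrary bilinear or hermitian forms; the right statement is that hermitian forms over $(Q,\can)$ \emph{are} diagonalizable (unlike quadratic forms), which should follow from the fact that every nonzero value of $h$ is represented and one can split off rank-one pieces — essentially the content of a Morita equivalence between $h$-forms over $(Q,\can)$ and symmetric bilinear forms over $F$. I would invoke this from~\cite{Knus:1998} or~\cite{Knus:1991}; if no clean reference is available, the argument is: pick an anisotropic vector (if $h$ is anisotropic; otherwise split hyperbolic planes first), its value lies in $\Sym(Q,\can) = Q^0$ which is a quadratic space over $F$, normalize to get a rank-one form $\langle c\rangle \otimes \can$, and induct on rank. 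The bookkeeping of the hyperbolic case and the interaction with the semi-trace (none here, since we are in the involution-only setting) is routine once the diagonalization is in hand.
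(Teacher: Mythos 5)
Your part (1) is correct and is essentially the paper's own argument in both directions: if $(A,\sigma)\simeq(B,\tau)\otimes(Q,\can)$ with $A$ split then $B$ is Brauer-equivalent to the division algebra $Q$, hence of even degree; conversely $(Q,\can)\otimes(Q,\can)$ is split, symplectic and hyperbolic of degree $4$, and uniqueness of the symplectic involution on a split algebra gives the decomposition when $\deg A\equiv 0\bmod 4$.

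Part (2) contains a genuine gap. You claim that every nondegenerate hermitian form over $(Q,\can)$ corresponds, via a ``Morita-type equivalence,'' to a symmetric bilinear form over $F$, i.e.\ is of the form $b\otimes\can$. This is false in characteristic $2$: the rank-one hermitian form $\langle v\rangle$ on $Q$, for a pure quaternion $v\in Q^0\setminus F$, is nondegenerate but its adjoint involution is $\Int(v^{-1})\circ\can$, which is \emph{orthogonal} (since $v\in\Sym(Q,\can)\setminus\Symd(Q,\can)$), whereas any $\Ad_b\otimes(Q,\can)$ is symplectic. Hermitian forms over $(Q,\can)$ account for all involutions of the first kind on $M_r(Q)$, of both types, so no such unconditional classification can hold. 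The step you are missing — and the one the paper makes explicitly — is to use the hypothesis that $\sigma$ is symplectic to deduce that $h$ is \emph{alternating}, i.e.\ $h(v,v)\in\Symd(Q,\can)=F$ for all $v\in V$. Once this is known, an orthogonal basis $(v_1,\dots,v_r)$ of $(V,h)$ (which exists by \cite[Chap.~I, (6.2.4)]{Knus:1991}) automatically has diagonal values in $F^\times$, the restriction of $h$ to $U=Fv_1\oplus\dots\oplus Fv_r$ is a symmetric bilinear form $b$ over $F$, and $U\otimes_F Q\simeq V$ yields $(A,\sigma)\simeq\Ad_b\otimes(Q,\can)$. Your fallback ``normalize the diagonal value'' argument does not repair the gap: for $\lambda\in Q^\times$ the congruence $c\mapsto\overline{\lambda}c\lambda$ satisfies $\overline{\overline{\lambda}c\lambda}=\overline{\lambda}\,\overline{c}\,\lambda$, hence preserves both $\Symd(Q,\can)=F$ and its complement in $Q^0$, so a diagonal value lying in $Q^0\setminus F$ can never be normalized into $F$. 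The remainder of your part (2) (containment of $(Q,\can)$ and hyperbolicity over $F_Q$) is fine once the decomposition is established.
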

\begin{proof}
$(1)$ If $A$ is split and $(A,\sigma)=(B,\tau)\otimes(Q,\ba)$, then $B$ is Brauer-equivalent to $Q$. 
Therefore $B$ has even degree and $\deg(A)=2\deg(B)\equiv 0\mod 4$. 
Assume conversely that $\deg(A)=4r$ for some integer $r\geq 1$. 
Since $\can\otimes\ba$ is symplectic, the tensor product $(Q,\can)\otimes (Q,\ba)$ is split,  hyperbolic and has degree $4$. Therefore for any nondegenerate $r$-dimensional symmetric bilinear form $b$, 
$$(A,\sigma)\simeq \Ad_b\otimes (Q,\can)\otimes(Q,\ba),$$
since these two algebras with involution are both split, symplectic, hyperbolic and of the same degree. This proves $(A,\sigma)$ contains $(Q,\ba)$. 

$(2)$
Let $V$ be a finite dimensional right $Q$-vector space such that $A\simeq\End_{Q}(V)$. Consider a hermitian form $h:\,V\times V\ra Q$ over $(Q,\ba)$ such that $\sigma=\ad_h$ (see~\cite[(4.2)]{Knus:1998}). Since $\sigma$ is symplectic, $h$ is alternating, that is $$\mbox{for all }v\in V,\ h(v,v)\in \Symd(Q,\ba)=F.$$ Moreover, by  \cite[Chap.~I, (6.2.4)]{Knus:1991} there exists an orthogonal basis  $(v_1,\ldots,v_r)$  of $(V,h)$. By restriction to the $F$-vector space $U=Fv_1\oplus\ldots\oplus Fv_r$, $h$ induces a symmetric bilinear form $b$ over $F$, and the natural isomorphism of $F$-spaces $U\otimes_FQ\simeq V$ induces an isomorphism of $F$-algebras with involution $\Ad_b\otimes (Q,\ba)\simeq (A,\sigma).$
\end{proof}

\begin{remark}
 \begin{enumerate}[$(1)$]
\item The previous proposition provides a description of all algebras with involution of degree $\deg(A)\equiv 2\mod 4$ which are hyperbolic over $F_Q$. Indeed, such an algebra has index at most $2$, hence it is either split or Brauer-equivalent to a quaternion division algebra. If it is not split it has odd co-index, hence no hyperbolic involution. Therefore if it admits a symplectic involution $\sigma$ which becomes hyperbolic over $F_Q$, it has to be split and hyperbolic over $F_Q$. 
\item The proposition also shows that any algebra with {\em anisotropic} symplectic involution which is split and hyperbolic over $F_Q$ actually is Brauer equivalent to $Q$, and does contain $(Q,\ba)$. 
\end{enumerate}
\end{remark}

\subsection{Degree $4$}\label{section:degree4}

Throughout this section $A$ is a central simple $F$-algebra of degree $4$, endowed with a symplectic involution. Since $Q\otimes_F Q\simeq M_4(F)$ is split, a direct computation in the Brauer group of $F$ shows that the algebra $A$ contains $Q$, that is decomposes as $Q'\otimes_F Q$ for some quaternion algebra $Q'$ over $F$, if and only if the tensor product $A\otimes_F Q$ has index at most $2$, that is, $A\otimes_F Q\simeq M_4(Q')$. When these conditions are satisfied the following proposition characterises the anisotropic symplectic involutions that become hyperbolic over $F_Q$. 

\begin{prop}\label{prop:containq}
Let $(A,\sigma)$ be a degree $4$ algebra with anisotropic symplectic involution. We assume $A\simeq Q\otimes_F Q'$ for some quaternion algebra $Q'$ over $F$. The involution $\sigma_{F_Q}$ is hyperbolic if and only if there exists an $F$-linear involution $\tau$ on $Q'$ such that $$(A,\sigma)\simeq (Q',\tau)\otimes(Q,\ba).$$
\end{prop}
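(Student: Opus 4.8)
The ``if'' direction is immediate from \S\ref{AI.sec}: if $(A,\sigma)\simeq(Q',\tau)\otimes(Q,\ba)$ then $(A,\sigma)$ contains $(Q,\ba)$, and since $(Q,\ba)_{F_Q}$ is hyperbolic, so is $(A,\sigma)_{F_Q}$. So assume $\sigma_{F_Q}$ is hyperbolic; I want to produce the decomposition. By the definition of ``$(A,\sigma)$ contains $(Q,\ba)$'' and the centraliser decomposition recalled in \S\ref{AI.sec}, it is enough to exhibit a $\sigma$-stable subalgebra $Q_0\subseteq A$ with $Q_0\simeq Q$ on which $\sigma$ restricts to the canonical involution: the centraliser of $Q_0$ is then a quaternion algebra, Brauer-equivalent to $A\otimes_F Q$ and hence, since $A\simeq Q\otimes_F Q'$, to $Q'$; so it is a copy of $Q'$, and $\sigma$ restricts to it as the required $\tau$. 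Thus everything comes down to manufacturing $Q_0$ from the hyperbolicity of $\sigma_{F_Q}$ and the hypothesis $A\simeq Q\otimes_F Q'$.

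To do so, the plan is to pass to quadratic forms through the exceptional isomorphism $B_2\equiv C_2$ of~\cite[15.C]{Knus:1998}: $(A,\sigma)$ is, together with its canonical involution, the even Clifford algebra $C_0(q)$ of a $5$-dimensional nondegenerate quadratic form $q$ over $F$, determined up to similarity; $C_0(q)$ is Brauer-equivalent to $A$, the form $q$ is anisotropic exactly when $\sigma$ is, and $q_{F_Q}$ is isotropic exactly when $\sigma_{F_Q}$ is hyperbolic. Under this translation the hypothesis $A\simeq Q\otimes_F Q'$ reads: $C_0(q)\otimes Q$ has index at most $2$; and, since the even Clifford algebra of the conic $n_Q^0$ is $Q$, one checks that ``$(A,\sigma)$ contains $(Q,\ba)$'' is equivalent to: $q$ contains a subform similar to $n_Q^0$, that is, $q$ is similar to $n_Q^0\perp\psi$ for some nonsingular binary quadratic form $\psi$. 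So the proposition is reduced to the following assertion about quadratic forms: if $q$ is anisotropic of dimension $5$ with $q_{F_Q}$ isotropic and $C_0(q)\otimes Q$ of index at most $2$, then $q$ is similar to $n_Q^0\perp\psi$ for some nonsingular binary $\psi$.

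This last assertion is the real content, and it is where characteristic $2$ bites. No subform theorem is available: a $5$-dimensional form may become isotropic over $F_Q$ without containing a similar copy of the conic (these are Faivre's $F_Q$-minimal forms, \cite{fairve:thesis}), so the index condition on $C_0(q)\otimes Q$ must be used precisely to exclude them. I would write $q=q_{\mathrm{ns}}\perp\qf{c}$ with $q_{\mathrm{ns}}$ nonsingular of dimension $4$ and $c\in F^\times$, read off from the isotropy of $q_{F_Q}$ a relation linking $(q_{\mathrm{ns}})_{F_Q}$ and $c$, and combine it with the fact that the even Clifford algebra of $q$, which is determined by $q_{\mathrm{ns}}$ and $c$, carries the class of $Q$ up to index $2$ — applying Lemma~\ref{prop:pfisterneigh} to suitable even-dimensional (sub)forms — until $q_{\mathrm{ns}}$ and $c$ are pinned down and $q$ acquires the shape $n_Q^0\perp\psi$. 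The computations would be organised around the isometries \eqref{eqnarray:nonsingiso}--\eqref{eqnarray:isoiso}, Proposition~\ref{prop:nsc} to move the quasilinear summand $\qf{c}$ past modifications of the nonsingular part, the common slot Lemma~\ref{prop:commonslot}, and the quaternion basis changes of Lemma~\ref{prop:quatbasischange}. \textbf{The main obstacle is precisely this step}: in characteristic $2$ one cannot cancel $\qf{c}$, so the clean strategy available in characteristic not $2$ (as in~\cite{queguiner:conic}) — cancel the odd-dimensional part, split off a hyperbolic plane, recognise $n_Q$ — is not available, and $c$ must instead be carried through every manipulation by means of Proposition~\ref{prop:nsc}, all the while checking that the index condition on $C_0(q)$ is preserved; keeping control of that interplay is the delicate point. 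Once $q$ has been brought to the form $n_Q^0\perp\psi$, reversing the $B_2\equiv C_2$ dictionary produces the $\sigma$-stable copy of $(Q,\ba)$ in $(A,\sigma)$, hence the decomposition $(A,\sigma)\simeq(Q',\tau)\otimes(Q,\ba)$.
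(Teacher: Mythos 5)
Your ``if'' direction is fine, and your reduction via $B_2\equiv C_2$ to the statement ``an anisotropic $5$-dimensional form $q$ with $q_{F_Q}$ isotropic and $C_0(q)\otimes Q$ of index at most $2$ contains a subform similar to $n_Q^0$'' is a legitimate reformulation (it is essentially Faivre's characterisation of $5$-dimensional $F_Q$-minimal forms, which the paper in fact \emph{deduces} from this proposition in \S\ref{sec:minform} rather than using as input). But you then stop exactly at the point where the proof begins: you name the key step, declare it ``the main obstacle'' and ``the delicate point'', sketch which tools you would reach for, and never carry out the argument. A plan of the form ``combine the isotropy relation with the index condition, using \eqref{eqnarray:nonsingiso}--\eqref{eqnarray:isoiso}, Proposition~\ref{prop:nsc} and Lemma~\ref{prop:quatbasischange}, until $q$ acquires the shape $n_Q^0\perp\psi$'' is not a proof; there is no indication of how the Brauer-class hypothesis actually enters, which is the whole point, since without it the conclusion is false (the $F_Q$-minimal forms are precisely the counterexamples).

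For comparison, the paper works on the involution side rather than fully translating to quadratic forms. It classifies symplectic involutions on $A=Q\otimes_F Q'$ by the relative discriminant $j_\gamma(\sigma)$, a $3$-fold Pfister form, with $\gamma=\ba\otimes\ba$. Writing $\sigma=\Int(x)\circ\gamma$, one computes $j_\gamma(\sigma)\equiv\pff{\Nrp_\gamma(x)}\otimes(n_Q\perp n_{Q'})\bmod I_q^4(F)$, and hyperbolicity over $F_Q$ plus the Arason--Pfister Hauptsatz and the common slot Lemma~\ref{prop:commonslot} yield $j_\gamma(\sigma)\simeq\pff{\Nrp_\gamma(\tilde x)}\otimes n_Q$ for some $\tilde x\in\Symd(A,\gamma)$ with $\pff{\Nrp_\gamma(\tilde x)}\otimes n_{Q'}$ hyperbolic. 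The crux is then Lemma~\ref{lemma:big}: $\qf{\Nrp_\gamma(\tilde x)}\otimes n_Q$ and $n_{Q'}^0$ represent a common value, which allows one to replace $\tilde x$ by $1\otimes y'$ for a pure quaternion $y'\in Q'$ and conclude that $\sigma$ is conjugate to $\ba\otimes\tau$ with $\tau=\Int(y')\circ\ba$. The proof of that lemma is a several-page case analysis (normalising $\tilde x$ via Lemma~\ref{prop:quatbasischange}, then manipulating explicit isometries with Proposition~\ref{prop:nsc} and \eqref{eqnarray:singiso}--\eqref{eqnarray:isoiso} to exhibit an isotropic $5$- or $6$-dimensional form dominated by a form with a large totally isotropic subspace). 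That computation, or an equivalent one on the quadratic-form side, is the content of the proposition, and it is entirely absent from your proposal.
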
 

From this, we deduce a complete description of anisotropic symplectic involutions in degree $4$ that become hyperbolic over $F_Q$: 
\begin{thm}\label{thm:sympclass}
Let $(A,\sigma)$ be a degree $4$ algebra with anisotropic symplectic involution. 
The involution $\sigma_{F_Q}$ is hyperbolic if and only if either
\begin{enumerate}[$(a)$]
\item $(A,\s)\simeq (Q',\tau)\otimes (Q,\can)$ for some quaternion $F$-algebra with involution $(Q',\tau)$, or 
\item  $(A,\s)\simeq \Ad_{\pff{\lambda}}\otimes (Q',\ba)$ for some quaternion $F$-algebra $Q'$ and some $\lambda\in F^\times$ such that $Q\otimes_F Q'$ is a division algebra and $\pff{\lambda}\otimes n_Q\simeq \pff{\lambda}\otimes n_{Q'}$.
\end{enumerate}
\end{thm}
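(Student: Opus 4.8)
The plan is to derive the theorem from Proposition~\ref{prop:containq}, Proposition~\ref{prop:splithyp} and a short Brauer-group bookkeeping for the degree-$4$ algebra $A$, the only genuinely new ingredient being a ``common slot'' manipulation of Pfister forms via Lemma~\ref{prop:commonslot}. First I would check that $(a)$ and $(b)$ each force $\sigma_{F_Q}$ to be hyperbolic. If $(a)$ holds then $(A,\sigma)$ contains $(Q,\can)$, so $\sigma_{F_Q}$ is hyperbolic since $(Q,\can)_{F_Q}$ is, exactly as recalled in \S\,\ref{AI.sec}. If $(b)$ holds, the requirement that $Q\otimes_F Q'$ be division forces $Q'\not\simeq Q$, so $Q'_{F_Q}$ is again division by Amitsur's theorem; writing $\sigma=\ad_h$ for the binary hermitian form $h=\qf{1,\lambda}$ over $(Q',\can)$, one uses that such a form is hyperbolic over a field precisely when $\lambda$ is a reduced norm there, equivalently when the quadratic Pfister form $\pff{\lambda}\otimes n_{Q'}$ is hyperbolic there (roundness, Lemma~\ref{lemma:round}). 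Since $\pff{\lambda}\otimes n_{Q'}\simeq\pff{\lambda}\otimes n_Q$ and $(n_Q)_{F_Q}$ is hyperbolic, it follows that $\pff{\lambda}\otimes n_{Q'}$, hence $\sigma$, is hyperbolic over $F_Q$.

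For the converse, assume $\sigma_{F_Q}$ is hyperbolic; since $\sigma$ is anisotropic, $A$ is not split. If $A\otimes_F Q$ has index at most $2$, then, as recalled just before Proposition~\ref{prop:containq}, $A\simeq Q\otimes_F Q'$ for some quaternion algebra $Q'$, and Proposition~\ref{prop:containq} immediately yields case $(a)$. So assume $A\otimes_F Q$ has index $4$. Then $A$ cannot have index $4$: otherwise $A$ is division of coindex $1$, while hyperbolicity of $\sigma_{F_Q}$ forces $A_{F_Q}$ to have even coindex, hence $\ind(A_{F_Q})<\ind(A)$; but the index of a degree-$4$ algebra drops over $F_Q$ only when the algebra decomposes with $Q$ as a tensor factor (index reduction, \cite{MPT1,MPT2}), contradicting the present case. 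So $A\simeq M_2(D_2)$ with $D_2$ a quaternion division algebra, and necessarily $D_2\not\simeq Q$ while $Q\otimes_F D_2$ is division, since otherwise $A\otimes_F Q\simeq M_2(Q\otimes_F D_2)$ would have index at most $2$.

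Next I would apply the argument of the proof of Proposition~\ref{prop:splithyp}(2) with $D_2$ in place of $Q$: it produces a $2$-dimensional nondegenerate symmetric bilinear form over $F$, which after scaling we may take to be $\pff{\mu}$, with $(A,\sigma)\simeq\Ad_{\pff{\mu}}\otimes(D_2,\can)$; equivalently $\sigma=\ad_{\qf{1,\mu}}$ over $(D_2,\can)$. Anisotropy of $\sigma$ says $\mu$ is not a reduced norm from $D_2$, so the $3$-fold quadratic Pfister form $\pff{\mu}\otimes n_{D_2}$ is anisotropic over $F$, whereas hyperbolicity of $\sigma_{F_Q}$ says it becomes hyperbolic over $F_Q$. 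By Lemma~\ref{prop:pfisterneigh} there is a (necessarily $2$-dimensional) symmetric bilinear form $b'$ with $\pff{\mu}\otimes n_{D_2}\simeq b'\otimes n_Q$; writing $b'=\nu_1\pff{\nu}$ and using that $\pff{\mu}\otimes n_{D_2}$ represents $1$ together with the roundness of $\pff{\nu}\otimes n_Q$, one concludes $\pff{\mu}\otimes n_{D_2}\simeq\pff{\nu}\otimes n_Q$.

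It remains to align the two slots. Applying the common slot lemma (Lemma~\ref{prop:commonslot}) to the $2$-fold quadratic Pfister forms $n_{D_2}$ and $n_Q$ gives $\lambda\in\mg F$ with $\pff{\mu}\otimes n_{D_2}\simeq\pff{\lambda}\otimes n_{D_2}\simeq\pff{\lambda}\otimes n_Q$. From $\pff{\lambda}\otimes n_{D_2}\simeq\pff{\mu}\otimes n_{D_2}$, cancelling the nonsingular form $n_{D_2}$ gives $\lambda\mu^{-1}=\Nrd_{D_2}(z)$ for some $z\in\mg{D_2}$, and rescaling by $z$ the second vector of the orthogonal basis of $(D_2^{\,2},h)$ shows $\qf{1,\mu}\simeq\qf{1,\lambda}$ as hermitian forms over $(D_2,\can)$, whence $(A,\sigma)\simeq\Ad_{\pff{\lambda}}\otimes(D_2,\can)$. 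Taking $Q':=D_2$ this is exactly case $(b)$, since we have shown $Q\otimes_F Q'$ is division and $\pff{\lambda}\otimes n_Q\simeq\pff{\lambda}\otimes n_{Q'}$. I expect this last alignment to be the main obstacle: hyperbolicity over $F_Q$ only yields a priori some decomposition $(A,\sigma)\simeq\Ad_{\pff{\mu}}\otimes(D_2,\can)$ together with a similarity between $\pff{\mu}\otimes n_{D_2}$ and a Pfister form built from $n_Q$, and it is precisely Lemma~\ref{prop:commonslot}, used with the roundness of $n_{D_2}$ and $n_Q$, that allows one to choose a single value $\lambda$ serving simultaneously as a slot in the decomposition of $(A,\sigma)$ and in the compatibility condition defining case $(b)$.
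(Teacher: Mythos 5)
Your proof is correct, and in the crucial case where $A$ does not contain $Q$ it takes a genuinely different route from the paper. Both arguments agree up to the point where $(A,\sigma)\simeq\Ad_{\pff{\mu}}\otimes(Q',\can)$ with $Q\otimes_FQ'$ division, and both then use Lemma~\ref{prop:pfisterneigh} together with the common slot lemma~\ref{prop:commonslot} to produce the slot $\lambda$ with $\pff{\lambda}\otimes n_{Q'}\simeq\pff{\lambda}\otimes n_Q$. The paper, however, phrases everything in terms of the relative Pfaffian discriminant: it computes $j_\gamma(\sigma)=\pff{\mu}\otimes n_{Q'}$ for $\gamma=\can\otimes\can$ and concludes by invoking the classification of symplectic involutions in degree $4$ by $j_\gamma$ \cite[(16.19)]{Knus:1998}. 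You instead stay at the level of the binary hermitian form $\qf{1,\mu}$ over $(Q',\can)$ and pass from $\pff{\lambda}\otimes n_{Q'}\simeq\pff{\mu}\otimes n_{Q'}$ to $\qf{1,\mu}\simeq\qf{1,\lambda}$ by Witt cancellation of $n_{Q'}$ and the identity $G(n_{Q'})=D(n_{Q'})=\Nrd(\mg{Q'})\cup\{0\}$; this amounts to a hands-on proof of the injectivity of $j_\gamma$ on the involutions at hand, and is more elementary and self-contained. Two further points in your favour: you prove the sufficiency of condition $(b)$ explicitly (via the translation between hyperbolicity of $\qf{1,\lambda}$ and $\lambda$ being a reduced norm), and you justify via index reduction why $A\simeq M_2(Q')$ when $A$ does not contain $Q$ — both steps the paper leaves implicit. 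The only imprecision is the phrase ``$A\otimes_FQ$ has index $4$'' for the second case: a priori this index could be $8$ when $A$ is division, but your index-reduction argument excludes division $A$ in either event, so nothing is lost.
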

Case (b) provides examples of algebras with anisotropic symplectic involutions that become hyperbolic over $F_Q$ and do not contain $(Q,\ba)$. Indeed, since the algebra  $Q\otimes_FQ'$ is division, $M_2(Q')$ does not contain $Q$. It also follows from this observation that cases (a) and (b) are mutually exclusive. 

The proofs of Proposition~\ref{prop:containq} and Theorem~\ref{thm:sympclass} both rely on the fact that symplectic involutions in degree $4$ are classified by a  relative invariant given by a quadratic form, as we now  recall. For any symplectic involution $\gamma$ on a biquaternion algebra $H\otimes_F H'$, we denote the reduced Pfaffian norm and trace of $(H\otimes_F H',\gamma)$ by $\Nrp_\gamma$ and $\Trp_\gamma$ respectively, see~\cite[p.19]{Knus:1998} for a definition. 
In particular, $\Nrp_{\gamma}$ is a nonsingular quadratic form on the $6$-dimensional vector space $\Symd(H\otimes_F H',\gamma)$,  whose class in $I^2_q(F)$ is equal to the class of $n_H\perp n_{H'}$ (see \cite[(16.8) and (16.15)]{Knus:1998}). Hence $\Nrp_\gamma$ is an Albert form of the biquaternion algebra $H\otimes_F H'$. 

Consider another symplectic involution $\gamma'$ on $H\otimes_F H'$.  By~\cite[(2.7)]{Knus:1998}, there exists an invertible  element $x\in \Symd(H\otimes_F H',\gamma)$ such that $\gamma'=\Int(x)\circ\gamma$. Using~\cite[(16.15)]{Knus:1998}, one may compute the difference (or sum) of the two Pfaffian reduced norms: $\Nrp_{\gamma'}\perp \Nrp_\gamma=\pff{\Nrp_\gamma(x)}\otimes\Nrp_\gamma$ in $W_q(F)$. Moreover, as explained in~\cite[(16.18)]{Knus:1998}, there exists a unique $3$-fold Pfister form $j_\gamma(\gamma')$ such that $$j_\gamma(\gamma')\equiv \pff{\Nrp_\gamma(x)} \otimes \Nrp_\gamma \mod I^4_q(F).$$ The isomorphism class of this $3$-fold Pfister form is called the relative discriminant of $\gamma'$ with respect to $\gamma$. 
By definition, $j_\gamma(\gamma')$ is hyperbolic if $\gamma$ and $\gamma'$ are conjugate. In fact, the relative discriminant classifies symplectic involutions on $A$, as explained in~\cite[Thm(16.19)]{Knus:1998}. That is, two involutions $\gamma'$ and $\gamma''$ are conjugate if and only if $j_\gamma(\gamma')$ and $j_\gamma(\gamma'')$ are isomorphic. 

With this in hand, we can now prove the main results of this section. 

\begin{proof}[Proof of proposition~\ref{prop:containq}]
Assume that  $A=Q'\otimes_F Q$ and $\sigma$ is a symplectic involution on $A$ which is hyperbolic over $F_Q$. 
Let $\gamma=\ba\otimes \ba$ be the tensor product of the two canonical involutions. It is a symplectic involution on $A$, and as we just recalled, there exists an invertible element $x\in\Symd(A,\gamma)$ such that $\sigma=\Int(x)\circ \gamma$. 
For any $F$-linear involution $\tau$ on $Q'$, there exists $y\in\Sym(Q',\ba)$ such that $\tau=\Int(y)\circ \ba$, so that $\ba\otimes\tau=\Int(1\otimes y)\circ \gamma$. Hence we need to prove that there exists $y\in\Sym(Q',\ba)$ such that 
$\Int(x)\circ\gamma$ and $\Int(1\otimes y)\circ\gamma$ are conjugate. 

In order to prove this, let us consider the $3$-fold Pfister form $j_\gamma(\sigma)$. 
Using the identity~(\ref{qfidentity}), one may easily check from the computation made in~\cite[(16.15)]{Knus:1998}, with $v_1=1$, that the Pfaffian reduced norm of $(A,\gamma)$ is $\Nrp_\gamma=n_Q\perp n_{Q'}\in W_q(F)$. 
Hence, we get $$j_\gamma(\sigma)\equiv \pff{\Nrp_\gamma(x)}\otimes (n_Q\perp n_{Q'})\mod I_q^4(F).$$
Both $\gamma$ and $\sigma$ are hyperbolic over $F_Q$, hence this quadratic form is killed by $F_Q$. 
Since $n_Q$ is hyperbolic over $F_Q$, the Arason-Pfister Hauptsatz~\cite[(23.7)]{Elman:2008} shows that $\pff{\Nrp_\gamma(x)}\otimes n_{Q'}$ also is hyperbolic over $F_Q$. Therefore by Lemma~\ref{prop:pfisterneigh} there exists $\lambda\in F^\times$ such that 
$\pff{\Nrp_\gamma(x)}\otimes n_{Q'}=\pff{\lambda}\otimes n_Q$. Moreover, using the common slot lemma~\ref{prop:commonslot}, we may even assume that 
$$\pff{\Nrp_\gamma(x)}\otimes n_{Q'}=\pff{\lambda} \otimes n_{Q'}=\pff{\lambda}\otimes n_Q.$$
Hence, applying Lemma~\ref{Pfisteridentity}, we get $$j_\gamma(\sigma)\equiv \pff{\Nrp_\gamma(x)}\otimes(n_Q\perp n_{Q'})=\pff{\lambda\Nrp_\gamma(x)}\otimes n_Q\mod I_q^4(F),$$
so, by uniqueness of $j_\gamma(\sigma)$, we have $j_\gamma(\sigma)\simeq \pff{\lambda\Nrp_\gamma(x)}\otimes n_Q$. 
In addition, the previous equalities also show that $\pff{\lambda}\otimes(n_Q\perp n_Q')$, which is Witt equivalent to $\pff{\lambda} \otimes \Nrp_\gamma$, is hyperbolic. Hence, there exists $\tilde x\in \Symd(A,\gamma)$ such that $\lambda\Nrp_\gamma(x)=\Nrp_\gamma(\tilde x)$, and we get $$j_\gamma(\sigma)=\pff{\Nrp_\gamma(\tilde x)} \otimes n_Q.$$
Since $\pff{\Nrp_\gamma(x)}\otimes n_{Q'}=\pff{\lambda} \otimes n_{Q'}$, the form $\pff{\Nrp_\gamma(\tilde x)}\otimes n_{Q'}$ is hyperbolic. 

We claim that the result now follows from the following lemma: 
\begin{lemma} \label{lemma:big}
Let $Q'$ be a quaternion algebra and $(A,\gamma)=(Q,\can)\otimes (Q',\can)$.  Assume $\tilde x\in \Symd(A,\gamma)$ is a non-zero element such that $\pff{\Nrp_\gamma(\tilde x)}\otimes n_{Q'}$ is hyperbolic. Then the quadratic forms  
$\qf{\Nrp_\gamma(\tilde x)}\otimes n_Q$ and $n_{Q'}^0$ do represent a common value, where $n_{Q'}^0$ denotes the restriction of the norm form of $Q'$ to pure quaternions. 
\end{lemma}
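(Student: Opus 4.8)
The plan is to rephrase the conclusion as an isotropy statement about a $7$-dimensional form, and then to combine the two pieces of information carried by $\mu:=\Nrp_\gamma(\tilde x)$: that it is a nonzero value of the Albert form $\Nrp_\gamma$, and that $\pff{\mu}\otimes n_{Q'}$ is hyperbolic.

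First I would dispose of two easy cases. If $Q'$ is split, then $n_{Q'}^0$ is isotropic, hence universal, and already represents every value of $\qf{\mu}\otimes n_Q$. If $\mu\in D(n_Q)$, then $\qf{\mu}\otimes n_Q\simeq n_Q$ by Lemma~\ref{lemma:round}, so $\qf{\mu}\otimes n_Q$ represents $1$; and so does $n_{Q'}^0$, via the vector $1\in\Sym(Q',\can)$. Hence $1$ is a common value. So assume $Q'$ is division and $\mu\notin D(n_Q)$; then $n_{Q'}$, $n_{Q'}^0$ and $\qf{\mu}\otimes n_Q$ are all anisotropic, and the assertion becomes: the $7$-dimensional form $\qf{\mu}\otimes n_Q\perp n_{Q'}^0$ is isotropic.

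Next I would unwind the hypotheses. Since $n_{Q'}$ is an anisotropic quadratic Pfister form, hyperbolicity of $\pff{\mu}\otimes n_{Q'}=n_{Q'}\perp\mu n_{Q'}$ forces $\mu n_{Q'}\simeq n_{Q'}$. Now $n_{Q'}^0$ is dominated by $n_{Q'}$ (it is the restriction of $n_{Q'}$ to $\Sym(Q',\can)$), so $\mu n_{Q'}^0=\qf{\mu}\perp\mu b'[1,a']$ is dominated by $\mu n_{Q'}\simeq n_{Q'}$, and its nonsingular part $\mu b'[1,a']$ is therefore a subform of $n_{Q'}$; being nonsingular it is an orthogonal summand, say $n_{Q'}\simeq\mu b'[1,a']\perp\eta$ with $\eta$ anisotropic binary. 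On the other hand $\mu$ is a nonzero value of $\Nrp_\gamma$, which is nonsingular of dimension $6$ with $\Nrp_\gamma\equiv n_Q\perp n_{Q'}$ in $W_q(F)$; hence $n_Q\perp n_{Q'}\simeq\HH\perp\Nrp_\gamma$, the $7$-dimensional form $\Nrp_\gamma\perp\qf{\mu}$ is isotropic (its isotropic vector lies outside the polar radical $\qf{\mu}$), and consequently $n_Q\perp n_{Q'}\perp\qf{\mu}$ has Witt index at least $2$. Substituting $n_{Q'}\simeq\mu b'[1,a']\perp\eta$, scaling by $\mu$, and using $\mu^2 b'[1,a']\simeq b'[1,a']$, $\mu^2\eta\simeq\eta$ and $n_{Q'}^0=\qf{1}\perp b'[1,a']$, this rewrites, up to isometry, as: $\bigl(\qf{\mu}\otimes n_Q\perp n_{Q'}^0\bigr)\perp\eta$ has Witt index at least $2$, with $\eta$ anisotropic binary.

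It remains to deduce isotropy of $\qf{\mu}\otimes n_Q\perp n_{Q'}^0$ itself. If it were anisotropic, the Witt index condition would force the binary $\eta$ to split off orthogonally, $\qf{\mu}\otimes n_Q\perp n_{Q'}^0\simeq\eta\perp\chi$; comparing polar radicals and Arf invariants pins down $\chi\simeq\qf{1}\perp\chi_0$ with $\chi_0$ a $4$-dimensional form whose Witt class lies in $I^2_q(F)$, so either $\chi_0$ is hyperbolic --- whence $\qf{\mu}\otimes n_Q\perp n_{Q'}^0$ is isotropic, a contradiction --- or $\chi_0$ is anisotropic, hence similar to a quaternion norm form $n_{Q''}$, and one is left with an isometry of nondegenerate singular forms of the shape $\mu n_Q\perp b'[1,a']\perp\qf{1}\simeq\psi\perp\qf{1}$. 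I would then strip the $\qf{1}$ via Proposition~\ref{prop:nsc} (in the general Hoffmann--Laghribi form), feed in a convenient diagonal slot, and use once more that $\mu$ is a value of the Albert form $\Nrp_\gamma$ --- a condition genuinely involving $Q$, not merely $Q'$, and without which the statement is false, cf.\ case~$(b)$ of Theorem~\ref{thm:sympclass} --- together with a normalisation of a quaternion basis of $Q'$ from Lemma~\ref{prop:quatbasischange}, to contradict $\mu\notin D(n_Q)$. I expect this last step --- excluding the spurious alternative by manipulating the singular tails with the identities~(\ref{eqnarray:nonsingiso})--(\ref{eqnarray:isoiso}) and Proposition~\ref{prop:nsc}, which is precisely where characteristic $2$ genuinely intervenes, since $n_{Q'}^0$ cannot be cancelled and $n_{Q'}^0\hookrightarrow n_{Q'}$ is only a domination --- to be the main obstacle; everything up to it is routine bookkeeping.
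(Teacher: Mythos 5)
Your opening reductions are mostly sound: the passage to isotropy of $\qf{\mu}\otimes n_Q\perp n_{Q'}^0$, the isometry $\mu n_{Q'}\simeq n_{Q'}$, the splitting $n_{Q'}\simeq \mu b'[1,a']\perp\eta$, and the Witt-index estimate for $n_Q\perp n_{Q'}\perp\qf{\mu}$ all work (modulo a slip: scaling by $\mu$ produces $\mu\eta$, not $\mu^2\eta$, which is harmless since $\mu\eta$ is again anisotropic binary nonsingular). But the argument has a genuine gap exactly where you yourself say you ``expect the main obstacle'': the decisive step is never carried out. You reduce the lemma to excluding an isometry of the shape $\mu n_Q\perp b'[1,a']\perp\qf{1}\simeq \eta\perp\qf{1}\perp\chi_0$ with $\chi_0$ similar to a quaternion norm form, and then only describe a plan in the conditional mood (``I would then strip the $\qf{1}$\dots, feed in a convenient diagonal slot\dots''). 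That exclusion is the entire content of the lemma. Moreover, one intermediate claim feeding into it is unjustified: you assert that comparing Arf invariants pins the Witt class of $\chi_0$ into $I^2_q(F)$. In characteristic $2$ the Arf invariant of a nonsingular complement of the radical is \emph{not} an invariant of a nondegenerate singular form --- by (\ref{eqnarray:isoiso}) one has $[b_1,b_2]\perp\qf{c}\simeq\HH\perp\qf{c}$ whenever the left-hand side is isotropic, with $[b_1,b_2]$ of arbitrary Arf invariant, and more generally different complements of the radical carry non-isometric nonsingular forms. So from $\varphi\simeq\eta\perp\qf{1}\perp\chi_0$ you cannot conclude anything about the Arf invariant of $\chi_0$, and if $\chi_0\notin I^2_q(F)$ your dichotomy (hyperbolic versus similar to a quaternion norm) is not exhaustive.

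The underlying reason the final step resists your approach is that you use the hypothesis ``$\mu$ is a value of the Albert form $\Nrp_\gamma$'' only in the weak, abstract form ``$\Nrp_\gamma\perp\qf{\mu}$ is isotropic''. The paper instead writes $\tilde x$ in coordinates with respect to quaternion bases of $Q$ and $Q'$ via the formula of \cite[(16.15)]{Knus:1998}, normalises these coordinates by the basis changes of Lemma~\ref{prop:quatbasischange}, and thereby puts $\mu$ into one of the explicit shapes $\alpha^2+\alpha\beta+\beta^2(a+a')$ (with $\beta\neq0$) or $\alpha^2+\varepsilon b+\varepsilon' b'$ (with $\varepsilon,\varepsilon'\in\{0,1\}$). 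The subsequent case analysis, using the identities (\ref{eqnarray:nonsingiso})--(\ref{eqnarray:isoiso}) and Proposition~\ref{prop:nsc} to massage the singular tails, exhibits in each case a $5$- or $6$-dimensional subform dominated by a form with a large totally isotropic subspace, hence isotropic, which is exactly the common-value statement. This explicit control on $\mu$ is the leverage your argument lacks; to complete your route you would have to either supply it or find a genuinely coordinate-free substitute for the concluding contradiction, and as it stands neither is done.
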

Indeed, assuming the lemma for now, we get that there exists a quaternion $y\in Q$ and a pure quaternion $y'\in {Q'}^0$ such that 
$\Nrp_\gamma(\tilde x)n_Q(y)=n_{Q'}(y')$. 
Since $\pff{n_Q(y)} \otimes n_Q$ and $\pff{n_{Q'}(y')}\otimes n_{Q'}$ are hyperbolic, we get, using again Lemma~\ref{Pfisteridentity}, 
\begin{eqnarray*}j_\gamma(\sigma)&\equiv& \pff{\Nrp_\gamma(\tilde x)n_Q(y)} \otimes n_Q
\\&\equiv&\pff{n_{Q'}(y')}\otimes  (n_Q\perp n_{Q'})
\\&\equiv& \pff{n_{Q'}(y')} \otimes \Nrp_\gamma\mod I_q^4(F)\,.\end{eqnarray*}
On the other hand, since $y'$ is pure quaternion, $y'\in \Symd(A,\gamma)$ and $n_{Q'}(y')=\Nrp_\gamma(1\otimes y')$. 
Hence, $j_\gamma(\sigma)=j_\gamma\bigl(\Int(1\otimes y')\circ \gamma\bigr)$, the two involutions are conjugate and this finishes the proof. 
\end{proof} 

Before proving the lemma, we  explain how we deduce the main theorem. 
\begin{proof}[Proof of theorem~\ref{thm:sympclass}]
Let $(A,\sigma)$ be an algebra with symplectic anisotropic involution which is hyperbolic over $F_Q$. If the algebra $A$ contains $Q$, then $(A,\sigma)$ is as in (a) by Proposition~\ref{prop:containq}. Otherwise, $A=M_2(Q')$ for some quaternion algebra $Q'$ over $F$ such that $Q\otimes_F Q'$ is division. Arguing as in the proof of Proposition~\ref{prop:splithyp}(2), one may check there exists $\mu\in F^\times$ such that $(A,\sigma)=\Ad_{\pff{\mu}}\otimes(Q', \ba)$. Again, let us denote by $\gamma$ the involution $\ba\otimes \ba$, which is symplectic and hyperbolic. We have $\Nrp_\gamma=n_{Q'}$ and $$j_\gamma(\sigma)=\pff{\mu} \otimes n_{Q'}.$$
Since $\sigma$ is hyperbolic over $F_Q$, this form is split by $F_Q$, and applying again Lemma~\ref{prop:pfisterneigh} and the common slot lemma~\ref{prop:commonslot}, we get 
$$j_\gamma(\sigma)=\pff{\lambda}\otimes n_Q=\pff{\lambda}\otimes n_{Q'},$$ for some $\lambda\in F^\times$. 
Moreover, since the relative discriminant is classifying, as explained above, we have 
$(A,\sigma)\simeq \Ad_{\pff{\lambda}}\otimes(Q', \ba)$ and the proof is complete. 
\end{proof}

We finish this section with the core of the argument, which is hidden in lemma~\ref{lemma:big}. 

\begin{proof}[Proof of Lemma~\ref{lemma:big}]
Let us denote $\mu=\Nrp_\gamma(\tilde x)$. We have to prove that the quadratic forms $\qf{\mu}\otimes n_Q$ and $n_{Q'}^0$ represent a common value. 
If either $Q$ or $Q'$ is not division, then one of the two quadratic forms is isotropic, hence universal, and the result follows immediately. Hence we may assume that $Q$ and $Q'$ are both division. 

Let $(1,u,v,w)$ be a quaternion basis of $Q$ and $(1,u',v',w')$ a quaternion basis of $Q'$.
Since $\tilde x\in\Symd(A,\gamma)$, the computation in~\cite[(16.15)]{Knus:1998} shows there exist $\alpha,\beta,\zeta,\delta,\lambda,\nu\in F$ such that 
$$\tilde x = \alpha(1\otimes 1) +\beta( u\otimes 1 + 1\otimes u') +\zeta (v\otimes 1) + \delta (1\otimes v') +\lambda (w\otimes 1) +\nu (1\otimes w').$$
We claim that, after a change of quaternion basis for $Q$ and $Q'$ as in~Lemma~\ref{prop:quatbasischange}, we may assume that either $\beta\not=0$ and $\zeta=\delta=\lambda=\nu=0$, or 
$\beta=\delta=\nu=0$ and $\zeta, \lambda\in\{0,1\}$.  
Indeed, assume first $\beta\not=0$. Then we may replace $u$ by $\hat{u} = u + \frac{\zeta}{\beta} v + \frac{\lambda}\beta w$ and $u'$ by $\hat{u'} = u' + \frac{\delta}{\beta} v' + \frac{\nu}\beta w'$, so that $\tilde x= \alpha(1\otimes 1) +\beta( \hat u\otimes 1 + 1\otimes\hat {u'})$. 
Assume now $\beta=0$. 
If $(\zeta v+\lambda w)^2=0$, those two terms do not contribute to $\mu=\Nrp_\gamma(\tilde x)$, and we may assume $\zeta=\lambda=0$. Otherwise, we change the quaternion basis of $Q$, replacing $v$ with $\hat v=\zeta v+\lambda w$, so that $\tilde x= \alpha(1\otimes 1) +\hat v\otimes 1 + \delta (1\otimes v') +\nu (1\otimes w')$. The same argument applied to $Q'$ shows we may also assume $\nu=0$ and $\delta\in\{0,1\}$. 

These changes of basis being made, we let $u^2+u=a$, $v^2=b$, $u'^2+u'=a'$ and $v'^2=b'$, so that $n_Q=\pfr{a,b}$ and $n_{Q'}^0=\qf{1}\perp b'[1,a']$. By our hypothesis, the form $\pfr{\mu,a',b'}$ is hyperbolic. Adding $2\times[1,a+a']\simeq 2\times \HH$, we get that 
$$\pfr{\mu,b',a'}\perp 2\times [1,a+a'] \simeq 6\times \HH. $$
Since $[1,a']\perp[1,a+a']=[1,a]\perp\HH$ by~(\ref{eqnarray:nonsingiso}), Witt cancellation~\cite[(8.4)]{Elman:2008} gives 
\begin{eqnarray}
\label{hyp}
  [1,a]\perp b'[1,a']\perp \mu[1,a']\perp \mu b'[1,a']\perp [1,a+a']\simeq 5\times \HH\,.  
\end{eqnarray}
The scalar $\mu=\Nrp_\gamma(\tilde x)$ is given either by $\mu=\alpha^2+\alpha\beta+\beta^2(a+a')$, with $\beta\not=0$, or by $\mu=\alpha^2+\varepsilon b+\varepsilon' b'$, with $\varepsilon,\varepsilon'\in \{0,1\}$. 
Let us now consider the different possible cases separately. 

\textbf{Case $(1)$:} Assume $\tilde x = \alpha(1\otimes 1) +\beta( u\otimes 1 + 1\otimes u')$, and $\mu=\alpha^2 +\alpha\beta +(a+a')\beta^2$, with $\beta\not=0$. 
The quadratic form $[1,a+a']\perp \qf{\mu}$ is isotropic and hence isometric to $\HH\perp \qf{\mu}$ by (\ref{eqnarray:isoiso}). 
Therefore, by Proposition~\ref{prop:nsc}, there exists an element $c\in F$ such that $[1,a+a']\perp {\mu}[1,a']\simeq \HH \perp \mu[1,c]$. Substituting this into (\ref{hyp}), and using   Witt cancellation gives 
$$ \rho= [1,a]\perp b'[1,a']\perp \mu[1,c]\perp \mu b'[1,a']\simeq 4\times \HH\,.  $$
The hyperbolic quadratic  form $\rho$ is  $8$-dimensional, hyperbolic and dominates the $5$-dimensional quadratic  form  $\psi=[1,a]\perp \mu b'[1,a']\perp\qf{\mu}$. Hence, $\psi$ is isotropic and therefore $[1,a]$ and $ \mu b'[1,a']\perp\qf{\mu}$ represent an element in common. As $[1,a]$ is a subform of $n_Q$ and $ \mu b'[1,a']\perp\qf{\mu}\simeq \qf{\mu}\otimes n_{Q'}^0$, this gives the result.

\textbf{Case $(2)$:}  Assume $\tilde x=\alpha(1\otimes 1)+\varepsilon v+\varepsilon' v'$ with $(\varepsilon,\varepsilon')\not=(1,1)$. This gives three subcases in which  we easily get the required conclusion. 
If $\tilde x=\alpha(1\otimes 1)$ then $\mu$ is a square, and the result follows since both $n_Q$ and $n_{Q'}^0$ represent $1$. 
If $\tilde x=\alpha(1\otimes 1)+v$, so that $\mu=\alpha^2+ b$, then $n_Q=\pfr{a,b}$ represents $\mu$. Hence $\qf{\mu}\otimes n_Q$ represents $1$, and the result follows.
Finally if $\tilde x=\alpha(1\otimes 1)+v'$ so that $\mu = \alpha^2 + b'$, then $\mu$ is represented by $n_Q'^0$, and also by $\qf{\mu}\otimes n_Q$. 

\textbf{Case $(3)$:} The only remaining case is $\tilde x=\alpha(1\otimes 1)+v+v'$ and $\mu = \alpha^2 +b+b'$. Using~(\ref{eqnarray:singiso})  and~(\ref{eqnarray:multiply}), one may check that 
$$ b'[1,a']\perp\qf{\mu} \simeq  (b'+\mu)[1,c] \perp\qf{\mu} \textrm{ for some } c\in F\,.$$
Since $b'+\mu=b+\alpha^2$, adding $\qf{\mu}$ on both sides of~(\ref{hyp}) we get 
$$   [1,a]\perp (b+\alpha^2)[1,c]\perp \mu[1,a']\perp \mu b'[1,a']\perp [1,a+a']\perp\qf{\mu}\simeq 5\times \HH\perp\qf{\mu}\,.  $$
We consider two cases.

\textbf{Case $(a)$:} $\alpha=0$. We have by (\ref{eqnarray:singiso}) that
$ \mu[1,a']\perp \qf{\mu}\simeq \HH \perp \qf{\mu}$
 and hence substituting this into the above equation we get 
$$  \rho= [1,a]\perp b[1,c]\perp  \mu b'[1,a']\perp [1,a+a']\perp\qf{\mu}\simeq 4\times \HH\perp\qf{\mu}\,.  $$ 
The $9$-dimensional form $\rho$ has a totally isotropic subspace of dimension $4$ and dominates the $6$-dimensional quadratic form $\psi=[1,a]\perp \qf{b}\perp \mu b'[1,a']\perp\qf{\mu}$. Therefore as in Case $(1)$, $\psi$ is isotropic. Hence $[1,a]\perp \qf{b}$, which is dominated by $n_Q$, and  $\qf{\mu}\otimes n_{Q'}$ do represent a common value, as required.

\textbf{Case $(b)$:} $\alpha\neq 0$. Using (\ref{eqnarray:multiply}) and (\ref{eqnarray:singiso}), we get that for some $d,e,f\in F$
\begin{eqnarray*}
(b+\alpha^2)[1,c] \perp  [1,a+a']  & \simeq  &  (b+\alpha^2)  [1,c] \perp [\alpha^2 , (a+a')\alpha^{-2}] \\
& \simeq & b[1,d] \perp [\alpha^2 ,e] \simeq  b[1,d]\perp [1,f]\,.
\end{eqnarray*}
Substituting this into the equation above gives
$$   [1,a]\perp b[1,d]\perp \mu[1,a']\perp \mu b'[1,a']\perp [1,f]\perp\qf{\mu}\simeq 5\times \HH\perp\qf{\mu}\,.  $$
Again by  (\ref{eqnarray:singiso})  we have that 
$ \mu[1,a']\perp\qf{ \mu}\simeq \HH \perp \qf{\mu}$ and hence we get 
$$ \rho=  [1,a]\perp b[1,d]\perp  \mu b'[1,a']\perp [1,f]\perp\qf{\mu}\simeq 4\times \HH\perp\qf{\mu}\,.  $$
The $9$-dimensional form $\rho$ has a totally isotropic subspace of dimension $4$ and dominates the $6$-dimensional quadratic for $\psi=[1,a]\perp \qf{b}\perp \mu b'[1,a']\perp\qf{\mu}$, and hence we get the result as in Case $(a)$.
\end{proof}

\subsection{$F_Q$-minimal forms of dimension $5$}\label{sec:minform}

In this section we use Theorem~\ref{thm:sympclass} to recover a result of Faivre, which characterises $F_Q$-minimal quadratic forms of dimension $5$. The proof is based on the exceptional isomorphism $B_2\equiv C_2$, described in~\cite[\S 15.C]{Knus:1998}. More precisely, to a $5$-dimensional nondegenerate quadratic form $\rho$, one may associate its even Clifford algebra $C_0(\rho)$, which is a biquaternion algebra~\cite[\S 11]{Elman:2008}. Moreover, the canonical involution $\tau_0$ restricted to $C_0(\rho)$ is of symplectic type. Conversely, any biquaternion algebra with symplectic involution $(A,\sigma)$ is the Clifford algebra of a $5$ dimensional nondegenerate quadratic form, endowed with its canonical involution. This quadratic form can be explicitly described: it is the restriction of the reduced Pfaffian norm $\Nrp_\sigma$ to the set $\Symd(A,\sigma)^0$ of symmetrised elements with Pfaffian reduced trace $0$. 

In the previous section we described algebras of degree $4$ with symplectic involution that are hyperbolic over $F_Q$. 
To deduce information on $F_Q$-minimal quadratic forms we will use the following: 
\begin{prop}\label{prop:formtocifford}
Let $\rho$ be a nondegenerate quadratic form over $F$ of odd dimension. 
\begin{enumerate}[$(1)$]
\item If $\rho$ is isotropic then $(C_0(\rho),\tau_0)$ is hyperbolic. If $\dim(\rho)=5$ the converse holds. 
\item If $\rho$ dominates a quadratic form similar to $[1,a]\perp\qf{b}$ then $(C_0(\rho),\tau_0)$ contains $(Q,\can)$. The converse holds if $\dim(\rho)=5$. 
\end{enumerate}
\end{prop}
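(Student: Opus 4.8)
The plan is to run everything through the even Clifford algebra dictionary supplied by the exceptional isomorphism $B_2\equiv C_2$ of \cite[\S 15.C]{Knus:1998}. I use that the even Clifford algebra is a similarity invariant; that for a $3$-dimensional nondegenerate form $\eta$ one has $(C_0(\eta),\tau_0)\simeq(H,\can)$ for a quaternion algebra $H$ and in fact $\eta\simeq n_H^0$, so that $3$-dimensional nondegenerate forms are classified up to similarity by their even Clifford algebra with canonical involution; and that on $5$-dimensional nondegenerate forms $\rho\mapsto(C_0(\rho),\tau_0)$ induces an equivalence onto the groupoid of degree $4$ algebras with symplectic involution, a quasi-inverse being $(A,\sigma)\mapsto\Nrp_\sigma|_{\Symd(A,\sigma)^0}$. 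In each of (1) and (2) the first implication holds in every odd dimension and is proved directly; the converse in dimension $5$ then follows by realising the algebra in question by a form of the required type and invoking the equivalence.

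For the first assertion of (1): if $\rho$ is isotropic, pick vectors $e,f$ of the underlying space with $\rho(e)=0$ and $b_\rho(e,f)=1$. In $C_0(\rho)$ one has $(ef)^2=ef$, $\tau_0(ef)=fe=1-ef$, and $ef$ is a nontrivial idempotent (as $e^2=0$); hence $\tau_0$ is hyperbolic. For the converse when $\dim\rho=5$: hyperbolicity of $\tau_0$ forces $C_0(\rho)$ to have even coindex, so $C_0(\rho)\simeq M_2(Q')$ for a (possibly split) quaternion algebra $Q'$, carrying the unique hyperbolic symplectic involution. The $5$-dimensional form $\rho'=\HH\perp n_{Q'}^0$ is isotropic, its even Clifford algebra is Brauer equivalent to $Q'$ (adding a hyperbolic plane does not change that class) hence isomorphic to $M_2(Q')$, and its canonical involution is hyperbolic by the first part; thus $(C_0(\rho'),\tau_0)\simeq(C_0(\rho),\tau_0)$, and by the equivalence $\rho$ is similar to $\rho'$, hence isotropic.

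For the first assertion of (2): that $\rho$ dominates a form similar to $[1,a]\perp\qf{b}$ means $\rho$ dominates $\mu n_Q^0$ for some $\mu\in F^\times$. An isometric embedding $\mu n_Q^0\hookrightarrow\rho$ induces an $F$-algebra homomorphism $C_0(\mu n_Q^0)\to C_0(\rho)$, injective because $C_0(\mu n_Q^0)\simeq C_0(n_Q^0)$ is simple; its image is a $\tau_0$-stable subalgebra which, under the identification $(C_0(\mu n_Q^0),\tau_0)\simeq(Q,\can)$, carries the canonical involution. Thus $(C_0(\rho),\tau_0)$ contains $(Q,\can)$. For the converse when $\dim\rho=5$: write $(C_0(\rho),\tau_0)\simeq(Q,\can)\otimes(B,\tau)$ and set $\psi=\Nrp_{\tau_0}|_{\Symd(C_0(\rho),\tau_0)^0}$, which is similar to $\rho$ by the equivalence. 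The copy $(Q,\can)\otimes 1$ of $(Q,\can)$ in $C_0(\rho)$ is $\tau_0$-stable, the involution $\tau_0=\can\otimes\tau$ restricting to $\can$ there; by the characteristic-$2$ analogue of the subalgebra part of the $B_2\equiv C_2$ dictionary, it is the even Clifford algebra, with its canonical involution, of a $3$-dimensional form dominated by $\psi$, and that form, having even Clifford algebra $(Q,\can)$, is similar to $n_Q^0$. Hence $\psi$, and therefore $\rho$, dominates a form similar to $n_Q^0$.

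The delicate point is this converse, and it is characteristic-$2$ specific: since the conic $n_Q^0$ is singular, $\mu n_Q^0$ need not split off $\rho$ as an orthogonal summand, so the correspondence between $\tau_0$-stable quaternion subalgebras of $C_0(\rho)$ and $3$-dimensional subforms of $\rho$ has to be stated with merely \emph{dominated} subforms. Establishing it amounts to computing the reduced Pfaffian norm on the appropriate $3$-dimensional subspace of $\Symd\bigl((Q,\can)\otimes(B,\tau)\bigr)^0$ — the one attached to the pure quaternions of the $Q$-factor — and recognising there a form similar to $n_Q^0$, while tracking its singular direction; this is where the argument genuinely departs from the characteristic not $2$ case. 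Together with Theorem~\ref{thm:sympclass} this is exactly what is used in \S\ref{sec:minform}, where $\rho$ is anisotropic of dimension $5$ and $(C_0(\rho),\tau_0)$ is a biquaternion algebra with anisotropic symplectic involution.
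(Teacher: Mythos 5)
Your easy directions are sound and close to the paper's: the idempotent $ef$ argument for the first part of (1) (the paper simply cites \cite[(8.5)]{Knus:1991}), the reduction of the converse of (1) to the isotropic form $\HH\perp n_{Q'}^{0}$ via uniqueness of the hyperbolic symplectic involution and the $B_2\equiv C_2$ equivalence (the paper cites \cite[(15.21)]{Knus:1991}), and the Clifford-functoriality argument for the first implication of (2), which is the paper's explicit computation with the generators $e_1e_2,e_1e_3$ written in functorial language.

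The gap is in the converse of (2), which you correctly single out as the delicate point but then do not prove. There is no citable ``subalgebra part of the $B_2\equiv C_2$ dictionary'' matching $\tau_0$-stable quaternion subalgebras of $C_0(\rho)$ with $3$-dimensional forms \emph{dominated} by $\rho$; in the dominated (as opposed to split-off) setting this correspondence is precisely what must be established, and your closing paragraph only names the computation that would do it. Moreover the subspace you point to, ``the one attached to the pure quaternions of the $Q$-factor,'' is not the right one: writing $(C_0(\rho),\tau_0)\simeq(Q',\tau)\otimes(Q,\can)$ with $\tau=\Int(y)\circ\can$ for a pure quaternion $y\in Q'$, the naive copy $1\otimes\Sym(Q,\can)$ need not consist of symmetrised elements of trivial Pfaffian reduced trace; the correct subspace is $y\otimes\Sym(Q,\can)$, spanned by $y\otimes 1$, $y\otimes v$, $y\otimes w$ for a quaternion basis $(1,u,v,w)$ of $Q$ (note that $\Symd(Q',\tau)=Fy$). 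One then computes from \cite[(15.19)]{Knus:1998} that $\Nrp_{\tau_0}$ restricts there to $y^2(\qf{1}\perp b[1,a])$, which is similar to $[1,a]\perp\qf{b}$; since $\Nrp_{\tau_0}$ restricted to $\Symd(C_0(\rho),\tau_0)^{0}$ is similar to $\rho$, this exhibits the required dominated subform. Until that computation is carried out, the converse of (2) --- the statement actually used in the minimal-form corollary --- remains unproven.
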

\begin{proof}
$(1)$ The first statement follows from \cite[(8.5)]{Knus:1991} and the second from \cite[(15.21)]{Knus:1991}.

$(2)$  Assume $\rho$ dominates the quadratic form $[1,a]\perp\qf{b}$. Then  the underlying vector space of $\rho$ contains vectors mapping to elements  $e_1,e_2$ and $e_3$ in  the full Clifford algebra of $\rho$, such that 
$$ e_1^2= 1,\quad  e_2^2=a, \quad e_3^2=b, $$
$$ e_1e_2+e_2e_1=1 \quad \textrm{and} \quad e_ie_3=e_3e_i\quad\textrm{for all } i\in \{1,2\}\,. $$
Hence, $(e_1e_2)^2=e_1e_2+a$, $(e_1e_3)^2=b$ and $(e_1e_2)(e_1e_3)=(e_1e_3)(1+e_1e_2)$. 
Moreover, the action of the canonical involution is given by $\tau_0(e_1e_2)=e_2e_1= 1+e_1e_2$ and $\tau_0(e_1e_3)=e_3e_1=e_1e_3$. 
Therefore $(e_1e_2, e_1e_3)$ generate a stable subalgebra of the even Clifford algebra $(C_0(\rho),\tau_0)$ isomorphic to $(Q,\ba)$. 
Since $(C_0(\rho),\tau_0)$ only depends on the similarly class of $\rho$, we have the first implication.

Conversely, assume $\rho$ has dimension $5$ and $(C_0(\rho),\tau_0)$ contains $(Q,\can)$. Since $C_0(\rho)$ has degree $4$, it decomposes as 
$$(C_0(\rho),\tau_0)\simeq (Q',\tau)\otimes (Q,\can)$$ for some quaternion algebra with involution $(Q',\tau)$.  We need to prove that the restriction of $\Nrp_{\tau_0}$ to the subset $\Symd(C_0(\rho),\tau_0)^0$, which consists of symmetrised elements with Pfaffian reduced trace $0$, contains a subform similar to $[1,a]\perp\qf{b}$. The quadratic form $\Nrp_{\tau_0}$ is computed in~\cite[(15.19)]{Knus:1998}. Let $y$ be a pure quaternion in $Q'$ such that $\tau=\Int(y)\circ \can$, and consider a quaternion basis $(1,u,v,w)$ of $Q$; the elements $1\otimes y$, $v\otimes y$ and $w\otimes y$ are symmetrised  elements with trivial Pfaffian reduced trace, and they generate a subspace of $\Symd(C_0(\rho),\tau_0)^0$, over which $\Nrp_{\tau_0}$ is $y^2(\qf{1}\perp b[1,a])$, which is similar to $\qf{b}\perp [1,a]$ as required. 
\end{proof}
The following was first shown in~\cite[(5.2.12)]{fairve:thesis}, and extends~\cite[Prop.~4.1]{hoffmann:minforms} to fields of characteristic $2$. 
\begin{cor} A $5$-dimensional nondegenerate quadratic form $\rho$ over $F$ is $F_Q$-minimal if and only if the following conditions hold:
\begin{enumerate}[$(a)$]
\item $\rho$ is similar to a Pfister neighbour of $\pfr{\lambda,b,a}$ for some $\lambda\in F^\times$.
\item$C_0(\rho)\simeq M_2(Q')$ for some $F$-quaternion algebra $Q'$ such that $Q\otimes_F Q'$ is a division algebra. 
\end{enumerate}
\end{cor}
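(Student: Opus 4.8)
The plan is to transport the statement through the exceptional isomorphism $B_2\equiv C_2$ recalled above and then apply Theorem~\ref{thm:sympclass}. Write $(A,\sigma)=(C_0(\rho),\tau_0)$, a biquaternion algebra with symplectic involution, so that $\rho$ is the restriction of $\Nrp_\sigma$ to $\Symd(A,\sigma)^0$. The basic dictionary, valid since $\dim\rho=5$, is: $\rho$ is anisotropic iff $(A,\sigma)$ is anisotropic; $\rho_{F_Q}$ is isotropic iff $(A,\sigma)_{F_Q}$ is hyperbolic; and $\rho$ dominates a form similar to $[1,a]\perp\qf b$ iff $(A,\sigma)$ contains $(Q,\can)$. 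The second statement and one direction of the first are Proposition~\ref{prop:formtocifford}(1), applied over $F_Q$ and over $F$ (the converses using $\dim\rho=5$); the third is Proposition~\ref{prop:formtocifford}(2) in both directions ($\dim\rho=5$). For the remaining direction of the first I would note that a biquaternion algebra with symplectic involution is anisotropic as soon as it is not hyperbolic --- if it is division there are no zero divisors, and if it is $M_2(Q')$ then $\sigma=\ad_h$ for a rank-$2$ hermitian form $h$, which becomes hyperbolic the moment it is isotropic.

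Next I would check that, for a $5$-dimensional nondegenerate $\rho$, being $F_Q$-minimal is equivalent to the conjunction: $\rho$ is anisotropic, $\rho_{F_Q}$ is isotropic, and $\rho$ does not dominate a form similar to the conic $[1,a]\perp\qf b$. One implication is clear since $[1,a]\perp\qf b$ is itself split by $F_Q$. For the other, a proper subform $\psi\subsetneq\rho$ split by $F_Q$ is anisotropic over $F$ of dimension at most $4$, and the classification of small forms split by $F_Q$ --- a $3$-dimensional one is similar to $n_Q^0$, a $4$-dimensional one either contains a subform similar to $n_Q^0$ or, being anisotropic and hyperbolic over $F_Q$, is similar to $n_Q$ by Lemma~\ref{prop:pfisterneigh} --- shows that in each case $\rho$ dominates a form similar to the conic. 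Combined with the dictionary this identifies $F_Q$-minimality of $\rho$ with: $(A,\sigma)$ anisotropic symplectic, hyperbolic over $F_Q$, not containing $(Q,\can)$.

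Now I apply Theorem~\ref{thm:sympclass}. If $\rho$ is $F_Q$-minimal then $(A,\sigma)$ falls into case $(b)$ of that theorem: $(A,\sigma)\simeq\Ad_{\pff{\lambda}}\otimes(Q',\can)$ with $Q\otimes_F Q'$ division and $\pff{\lambda}\otimes n_Q\simeq\pff{\lambda}\otimes n_{Q'}$. Reading off the underlying algebra gives $C_0(\rho)=A\simeq M_2(Q')$ with $Q\otimes_F Q'$ division, which is $(b)$. For $(a)$ one must produce the $3$-fold Pfister form of which $\rho$ is a neighbour: since $\ind(C_0(\rho))=2$, the $6$-dimensional Albert form $\Nrp_\sigma$ is isotropic and Witt-equivalent to $n_{Q'}$, so $\Nrp_\sigma\simeq\HH\perp n_{Q'}$; letting $\gamma$ be the hyperbolic (symplectic) involution on $A$ and using the relation between a $5$-dimensional quadratic form and the relative discriminant of its even Clifford algebra, one gets that $\rho$ is similar to a Pfister neighbour of $j_\gamma(\sigma)=\pff{\lambda}\otimes n_{Q'}=\pff{\lambda}\otimes n_Q$, which is the form $\pfr{\lambda,b,a}$ of $(a)$. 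Conversely, given $(a)$ and $(b)$, I run this backwards: $(a)$ forces $\rho$ anisotropic (its companion Pfister form is anisotropic because $Q\otimes_F Q'$ is division) and $\rho_{F_Q}$ isotropic (that Pfister form is $\pff{\lambda}\otimes n_Q$, hence hyperbolic over $F_Q$); from $(b)$, $M_2(Q')$ contains no copy of $Q$ because $Q\otimes_F Q'$ is division, so $(C_0(\rho),\tau_0)$ does not contain $(Q,\can)$ and hence $\rho$ dominates no form similar to the conic by Proposition~\ref{prop:formtocifford}(2); by the second paragraph, $\rho$ is $F_Q$-minimal.

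The two places where I expect real work are: first, the bookkeeping that identifies $F_Q$-minimality with the three conditions on $\rho$, namely showing that a proper subform of $\rho$ split by $F_Q$ already forces $\rho$ to dominate a form similar to the conic --- here one must be careful that in characteristic $2$ "dominated by" is strictly weaker than "subform of", exactly as in the example $[1,a]\perp\qf b\subset\pfr{a,b}$ of Section~\ref{section:basics}; and second, pinning down that the $3$-fold Pfister form of which $\rho$ is a neighbour is precisely $\pfr{\lambda,b,a}$, with the same $\lambda$ (up to the available flexibility) as in Theorem~\ref{thm:sympclass}$(b)$ --- this is the general relation between a $5$-dimensional quadratic form and the relative discriminant of its even Clifford algebra, and is where the bulk of the computation sits.
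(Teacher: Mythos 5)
Your overall route is the one the paper takes: translate $F_Q$-minimality of $\rho$ into the three conditions on $(C_0(\rho),\tau_0)$ --- anisotropic, hyperbolic over $F_Q$, not containing $(Q,\can)$ --- via Proposition~\ref{prop:formtocifford}, note that anisotropic and non-hyperbolic coincide in the degree~$4$ symplectic case, apply Theorem~\ref{thm:sympclass}, and read off (a) and (b); like the paper, you delegate the identification of the ambient $3$-fold Pfister form to the relation between a $5$-dimensional form and the relative discriminant of its even Clifford algebra (the paper cites \cite[Exercise 8, p.270]{Knus:1998} for exactly this). The genuine gap is in your second paragraph. The input you need there --- every anisotropic form of dimension at most $4$ that becomes isotropic over $F_Q$ dominates a form similar to $[1,a]\perp\qf{b}$ --- is precisely what the paper imports from Laghribi \cite[\S 1.3 and \S 1.4]{laghribi:qfdim6}, and your sketch does not prove it in dimension $4$. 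An anisotropic $4$-dimensional nonsingular form that becomes isotropic over $F_Q$ need not become hyperbolic there (its Arf invariant survives in $F_Q/\wp(F_Q)$, so if that invariant is nontrivial the Witt index over $F_Q$ is exactly $1$), so your dichotomy ``contains a subform similar to $n_Q^0$, or is anisotropic and hyperbolic over $F_Q$ and hence similar to $n_Q$ by Lemma~\ref{prop:pfisterneigh}'' is not exhaustive; the missing case is exactly the statement that there are no $4$-dimensional $F_Q$-minimal forms, which is a theorem, not bookkeeping. (You also only consider nonsingular subforms; forms dominated by $\rho$ with a nontrivial quasilinear part must be dealt with as well, which Laghribi's result does.) Either cite that result, as the paper does, or supply an actual proof.

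A secondary point concerns the converse. You claim that (a) forces $\rho$ anisotropic ``because its companion Pfister form is anisotropic because $Q\otimes_F Q'$ is division''. As written this does not follow: condition (a) by itself only produces some $\lambda$ with $s\rho$ dominated by $\pff{\lambda}\otimes n_Q$, and nothing in (a) alone prevents $\pff{\lambda}\otimes n_Q$ from being hyperbolic (take $\lambda$ represented by $n_Q$). To tie the anisotropy of that Pfister form to the hypothesis that $Q\otimes_F Q'$ is division you must pass through (b) and the computation of $C_0(\rho)$, i.e.\ relate $\lambda$, $Q$ and $Q'$ via the Clifford algebra. The paper's own converse is equally silent on the anisotropy of $\rho$, so the omission is shared, but your parenthetical justification is not a proof and should either be expanded or replaced by the correct argument through $C_0(\rho)$.
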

\begin{proof} 
 An anisotropic quadratic form of dimension $4$ or less becomes isotropic over $F_Q$ if and only if it dominates a form similar to   $\qf{b}\perp[1,a]$, see  \cite[\S 1.3 and \S 1.4]{laghribi:qfdim6}.
Hence the  quadratic form $\rho$ is $F_Q$-minimal if and only if it is anisotropic, isotropic over $F_Q$ and does not dominate any form similar to $\qf{b}\perp[1,a]$. 
By Proposition~\ref{prop:formtocifford} this holds if and only if $(C_0(\rho),\tau_0)$ is non-hyperbolic, hyperbolic over $F_Q$ and does not contain $(Q,\can)$.  Since we are in the degree $4$ symplectic case, non-hyperbolic and anisotropic are equivalent. Therefore  applying Theorem~\ref{thm:sympclass}, 
 we get that $\rho$ is $F_Q$-minimal if and only if 
$$ (C_0(\rho),\tau_0) \simeq \Ad_{\pff{\lambda}}\otimes (Q',\can)$$
for some $F$-quaternion algebra $Q'=[a',b')_F$ and some $\lambda\in F^\times$ with $Q\otimes_F Q'$ a division algebra and $\pfr{\lambda,b,a}\simeq  \pfr{\lambda,b',a'}$. 

It follows from the above isomorphism that $\rho$ is similar to a Pfister neighbour of $\pfr{\lambda,b',a'}$ by \cite[Exercise 8, p.270]{Knus:1998}. Thus $(a)$ and $(b)$ hold if $\rho$ is $F_Q$-minimal.
Conversely, if $\rho$ satisfies condition (a) it is isotropic over $F_Q$, hence $(C_0(\rho),\tau_0)$ is hyperbolic over $F_Q$. Moreover, condition (b) guarantees that $A$ does not contain $Q$. Therefore $ (C_0(\rho),\tau_0) $ does not contain $(Q,\can)$ by Theorem~\ref{thm:sympclass}, and $\rho$ is $F_Q$-minimal. 
\end{proof}

\section{Quadratic pairs over the function fields of conics}\label{sec:qp}

Throughout this section $(A,\sigma,f)$ is a central simple algebra endowed with a quadratic pair. 
We will describe the quadratic pairs that become hyperbolic over $F_Q$ if either $A_{F_Q}$ is split or $A$ has degree $4$.  
Quadratic pairs over fields of characteristic different from $2$ are equivalent to orthogonal involutions, and hence 
the results and proofs here are very similar to those of~\cite{queguiner:conic} on orthogonal involutions. The  additional ingredient we use in characteristic $2$ is the existence of a canonical quadratic pair on a tensor product of two algebras with symplectic involutions. 

\subsection{Quadratic pairs that become split hyperbolic over $F_Q$}

A central simple algebra split by $F_Q$ is either split or Brauer equivalent to $Q$. 
In both cases we have a complete description of the quadratic pairs that are hyperbolic over $F_Q$:

\begin{prop}
Let $(A,\s,f)$ be an $F$-algebra with quadratic pair. \begin{enumerate}[$(1)$]
 \item Assume $A$ is split, so that $(A,\sigma,f)\simeq \Ad_\rho$ for some nonsingular quadratic form $\rho$ over $F$. 
 Then $(A,\sigma,f)$ is hyperbolic over $F_Q$ if and only if the anisotropic part of $\rho$ is a multiple of $n_Q$, that is $\rho_\an\simeq \varphi\otimes n_Q$ for some symmetric bilinear form $\varphi$ over $F$. 
Moreover, assuming  $({\Ad_\rho})_{F_Q}$ is hyperbolic,  then $\Ad_\rho$ contains $(Q,\can)$ if and only if the Witt index of $\rho$ is a multiple of $4$.
 
\item  If $A$ is Brauer-equivalent to $Q$ and $(A,\s,f)_{F_Q}$ is hyperbolic then $(A,\s,f)$ is hyperbolic and contains $(Q,\can)$.
 \end{enumerate}
\end{prop}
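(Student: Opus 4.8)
The plan is to handle the two cases separately, using the structure theory for algebras with involution or quadratic pair over $F$ together with the hyperbolicity criterion of Lemma~\ref{prop:pfisterneigh}.

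For part $(1)$, with $A$ split, write $(A,\sigma,f)\simeq\Ad_\rho$ for a nonsingular quadratic form $\rho$ over $F$. The adjoint quadratic pair $\Ad_\rho$ is hyperbolic over $F_Q$ if and only if $\rho_{F_Q}$ is hyperbolic, which happens if and only if the anisotropic part $\rho_{\an}$ becomes hyperbolic over $F_Q$; by Lemma~\ref{prop:pfisterneigh} this is equivalent to $\rho_{\an}\simeq\varphi\otimes n_Q$ for a symmetric bilinear form $\varphi$, giving the first claim. For the second claim, assume $({\Ad_\rho})_{F_Q}$ is hyperbolic and let $2w$ be the dimension of the hyperbolic part of $\rho$, so $\rho\simeq (w\times\HH)\perp\rho_{\an}$ and $\rho_{\an}\simeq\varphi\otimes n_Q$. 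If $w=4k$, then $w\times\HH\simeq (k\times\pff{1})\otimes n_Q$ (since $n_Q$ is $4$-dimensional and $\pff{1}\otimes n_Q$ is hyperbolic of dimension $8$), so $\rho\simeq\psi\otimes n_Q$ with $\psi=(k\times\pff{1})\perp\varphi$ a symmetric bilinear form; then $\Ad_\rho\simeq\Ad_\psi\otimes\Ad_{[1,c]}$ where $n_Q\simeq\pff{a}\otimes[1,b]$, and since $\pff{a}$ is a factor one rewrites $\Ad_\rho\simeq\Ad_{\psi'}\otimes(Q,\can)$ using Lemma~\ref{decAdnQ.lem} and $\Ad_{n_Q}\simeq(Q,\can)\boxtimes(Q,\can)$, exhibiting $(Q,\can)$ inside $\Ad_\rho$. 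For the converse, if $\Ad_\rho$ contains $(Q,\can)$, then $\Ad_\rho\simeq(Q,\can)\boxtimes(B,\tau)$ for some algebra with symplectic involution $(B,\tau)$; comparing Brauer classes forces $B$ to be split, hence $(B,\tau)\simeq\Ad_{b_\rho'}$ is the hyperbolic symplectic involution of a space of some dimension $2m$, so $\Ad_\rho\simeq(Q,\can)\boxtimes\Ad_{b'}\simeq\Ad_{b'}\otimes(Q,\can)\boxtimes(Q,\can)\cdot(\text{correction})$; more directly, $\rho\simeq b'\otimes n_Q$ for a bilinear form $b'$ of dimension $m$ by Lemma~\ref{decAdnQ.lem}, and the Witt index of $b'\otimes n_Q$ is then $4$ times the Witt index of $b'$ (as $n_Q$ is an anisotropic Pfister form, so $b'\otimes n_Q$ has anisotropic part $(b')_{\an}\otimes n_Q$), hence divisible by $4$.

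For part $(2)$, assume $A$ is Brauer-equivalent to $Q$ and $(A,\sigma,f)_{F_Q}$ is hyperbolic. Since $A$ becomes split over $F_Q$ and $(A,\sigma,f)$ is hyperbolic there, in particular $A$ has even coindex, so $A\simeq M_{2r}(Q)$. Write $(A,\sigma,f)$ using a hermitian form: there is an $F$-linear involution $\theta$ on $Q$ and a nonsingular hermitian (or rather the appropriate quadratic analogue of hermitian) form whose adjoint is $(\sigma,f)$; since $A$ is Brauer-equivalent to $Q$ which carries the symplectic canonical involution $\can$, and $\sigma$ is symplectic, one can arrange $\theta=\can$. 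Over $F_Q$ the algebra splits and the quadratic pair becomes hyperbolic; the key point is that hyperbolicity of a quadratic pair on $M_{2r}(Q)$ is detected by the underlying hermitian-type form, and a nondegenerate form over $(Q,\can)$ that is hyperbolic over $F_Q$ is already hyperbolic over $F$ — this should follow by the same argument as in the split case once one descends along $F_Q/F$, using that $Q$ does not split, or by invoking the analogue of Lemma~\ref{prop:pfisterneigh} in the hermitian setting. Granting that $(A,\sigma,f)$ is already hyperbolic over $F$, it is the unique hyperbolic quadratic pair on $M_{2r}(Q)$, and since $(Q,\can)\boxtimes\Ad_{2r\times\HH^{bi}}$ is such a hyperbolic quadratic pair on $Q\otimes M_{2r}(F)\simeq M_{2r}(Q)$, we conclude $(A,\sigma,f)\simeq(Q,\can)\boxtimes(B,\tau)$ with $(B,\tau)$ the split hyperbolic symplectic algebra, so $(A,\sigma,f)$ contains $(Q,\can)$.

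The main obstacle is the descent step in part $(2)$: proving that a quadratic pair (equivalently, the relevant quadratic-hermitian form over $(Q,\can)$) on a $Q$-Brauer-equivalent algebra which is hyperbolic over $F_Q$ is already hyperbolic over $F$. One expects to reduce this, via the even Clifford construction or via a transfer argument, to the statement of Lemma~\ref{prop:pfisterneigh} about quadratic forms over $F$ becoming hyperbolic over $F_Q$; alternatively one can cite the corresponding result for orthogonal involutions from~\cite{queguiner:conic} and Bayer-Parimala-type descent, adapted to characteristic $2$. The remaining bookkeeping — the explicit Pfister-form arithmetic in part $(1)$ and identifying $(Q,\can)\boxtimes(\text{split hyperbolic})$ with the unique hyperbolic quadratic pair — is routine given Lemmas~\ref{decqp.lem} and~\ref{decAdnQ.lem}.
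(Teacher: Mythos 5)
Your overall strategy coincides with the paper's, and the first claim of part (1) together with the forward direction of the second claim are essentially the paper's argument (the paper writes $r\HH\simeq(s\HH^{bi})\otimes n_Q$ and invokes Lemma~\ref{decAdnQ.lem} to get $\Ad_\rho\simeq\Ad_{(\varphi\perp s\HH^{bi})}\otimes(Q,\can)\boxtimes(Q,\can)$, which is what your Pfister-arithmetic step amounts to). Two points, however, are genuinely problematic.

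First, in the converse direction of part (1) you assert that writing $\Ad_\rho\simeq(Q,\can)\boxtimes(B,\tau)$ and ``comparing Brauer classes forces $B$ to be split.'' This is backwards: since $A$ is split and $Q$ is division, $B$ must be Brauer-equivalent to $Q$ (a split $B$ would make the underlying algebra Brauer-equivalent to $Q$, contradicting that $A$ is split). The correct route, which is the paper's, is to apply Proposition~\ref{prop:splithyp}(2) to $(B,\tau)$ to get $(B,\tau)\simeq\Ad_b\otimes(Q,\can)$ for a symmetric bilinear form $b$, whence $\Ad_\rho\simeq\Ad_b\otimes(Q,\can)\boxtimes(Q,\can)\simeq\Ad_{b\otimes n_Q}$ by Lemma~\ref{decAdnQ.lem}; only then do you obtain $\rho\simeq b'\otimes n_Q$ and can run your (correct, and more explicit than the paper's) observation that the Witt index of $b'\otimes n_Q$ is divisible by $4$. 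As written, your ``more directly'' sentence does not follow from Lemma~\ref{decAdnQ.lem} alone.

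Second, and more seriously, the descent step in part (2) --- that a quadratic pair on an algebra Brauer-equivalent to $Q$ which becomes hyperbolic over $F_Q$ is already hyperbolic over $F$ --- is not something that ``should follow by the same argument as in the split case.'' Lemma~\ref{prop:pfisterneigh} is a statement about quadratic forms over $F$ and has no ready-made analogue for the generalised quadratic forms over $(Q,\can)$ that describe quadratic pairs on $M_{2r}(Q)$; nor does the function field $F_Q$ of a conic admit the kind of transfer along a finite extension that a Bayer--Lenstra/Parimala-type argument would need. This descent is a substantive theorem in its own right: the paper disposes of it by citing Becher--Dolphin, ``Non-hyperbolic splitting of quadratic pairs'' \cite[(8.2)]{dolphin:quadpairs}, the characteristic-$2$ analogue of the result of Parimala--Sridharan--Suresh/Dejaiffe used in \cite{queguiner:conic}. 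Since you explicitly flag this as the main obstacle and only gesture at possible approaches, your proof of part (2) is incomplete at its crucial point; once hyperbolicity over $F$ is granted, your identification of $(A,\sigma,f)$ with $(Q,\can)\otimes\Ad_{(r\HH)}$ via uniqueness of the hyperbolic quadratic pair is exactly the paper's conclusion.
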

\begin{remark}
In particular,  an anisotropic algebra with involution which is split and hyperbolic over $F_Q$ is already  split over $F$, and does contain $(Q,\can)$. 
\end{remark}
\begin{proof} 
(1) Assume $A$ is split and let $\rho$ be a quadratic form over $F$ such that $(A,\sigma, f)=\Ad_\rho$. The first assertion follows from  a particular case of the subform theorem, recalled in Lemma~\ref{prop:pfisterneigh}. Therefore to prove the second assertion we may assume $\rho=\varphi\otimes n_Q\perp r\HH$ for some symmetric bilinear form $\varphi$ and  $r$ the Witt index of $\rho$. 
If $r=4s$ for some integer $s$ then $r\HH\simeq (s\HH^{bi})\otimes n_Q$, therefore, by Lemma~\ref{decAdnQ.lem}, 
$$\Ad_{\rho}\simeq \Ad_{(\varphi\perp s\HH^{bi})} \otimes (Q,\can) \boxtimes (Q,\can)\,.$$ 
Hence $(A,\sigma,f)$ contains $(Q,\can)$ in this case. 
Assume conversely that $(A,\sigma,f)$ contains $(Q,\can)$, that is $(A,\sigma,f)\simeq (Q,\can)\boxtimes (B,\tau)$ for some algebra $B$ Brauer-equivalent to $Q$.
By Proposition~\ref{prop:splithyp}, $(B,\tau)\simeq\Ad_b\otimes(Q,\can)$ for some symmetric bilinear form $b$, hence $(A,\sigma,f)\simeq \Ad_{b}\otimes (Q,\can)\boxtimes (Q,\can)\simeq \Ad_{b\otimes n_Q}$
by  Lemma~\ref{decAdnQ.lem}.

(2) Assume now $A$ is Brauer equivalent to $Q$. 
By~\cite[(8.2)]{dolphin:quadpairs}, the quadratic pair $(\sigma,f)$ is hyperbolic over $F_Q$ if and only if it is hyperbolic over $F$. 
If this is the case then $A$ has even coindex, $A\simeq M_{2r}(Q)$ for some nonnegative integer $r$. 
So the tensor product $(Q,\can)\otimes \Ad_{(r\HH)}$ is Brauer equivalent to $A$, of the same degree, and also hyperbolic. Hence it is isomorphic to $(A,\sigma,f)$, and $(A,\sigma,f)$  contains $(Q,\can)$ as required. 
\end{proof}

\subsection{Quadratic pairs on algebras of degree $4$}
Let $(A,\sigma,f)$ be an $F$-algebra with quadratic pair. If $A$ has even degree, one may associate to $(\sigma,f)$ a discriminant and a Clifford algebra, which is endowed with a canonical involution~\cite[\S\,7.B]{Knus:1998}. The discriminant has values in $F/\wp(F)$, where 
$\wp(F)=\{x^2+x\mid x\in F\}$. It defines a quadratic extension $K/F$, which is the center of the Clifford algebra~\cite[Thm (8.10)]{Knus:1998}. In particular, if $(\sigma,f)$ has trivial discriminant, then its Clifford algebra is a direct product of two central simple algebras $C_+\times C_-$. 

Assume now that $A$ has degree $4$. Then the Clifford algebra, as an algebra with involution, is a complete invariant of the algebra with quadratic pair. This follows from the exceptional isomorphism $A_1^2\equiv D_2$, and is explained in detail in~\cite[\S\,15.B]{Knus:1998}. In particular, if $(A,\sigma,f)$ has trivial discriminant then its Clifford algebra is a direct product of two quaternion algebras $H_+$ and $H_-$, each endowed with its canonical involution and 
$$(A,\sigma)\simeq (H_+,\can)\boxtimes (H_-,\can)\mbox{ see~\cite[(15.12)]{Knus:1998}}.$$
With this in hand we now prove: 
\begin{prop}
Let $(A,\s,f)$ be an $F$-algebra of degree $4$ with a non-hyperbolic quadratic pair. Then $(A,\s,f)_{F_Q}$ is hyperbolic if and only if $(A,\s,f)$ contains $(Q,\can)$.
\end{prop}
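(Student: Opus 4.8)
The plan is to reduce to the degree-$4$ symplectic case, which is already handled by Theorem~\ref{thm:sympclass} and Proposition~\ref{prop:containq}, via the exceptional isomorphism $A_1^2\equiv D_2$. One direction is immediate from the general principle recalled in \S\ref{AI.sec}: if $(A,\sigma,f)$ contains $(Q,\can)$, then, since $(Q,\can)_{F_Q}$ is hyperbolic, $(A,\sigma,f)_{F_Q}$ is hyperbolic by~\cite[A.5]{Tignol:galcohomgps}. So the content is the converse.

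Assume therefore that $(A,\sigma,f)_{F_Q}$ is hyperbolic but $(A,\sigma,f)$ is not. First I would argue that the discriminant of $(\sigma,f)$ is trivial. Hyperbolicity of the quadratic pair forces $A$ to have even coindex, and over $F_Q$ the discriminant becomes trivial; since the discriminant lives in $F/\wp(F)$ and $F_Q/F$ is the function field of a conic, I would invoke the fact that a quadratic extension of $F$ that splits over $F_Q$ is either trivial or equal to the one split by $Q$ — but here a nontrivial discriminant would correspond to the center of the Clifford algebra being the field $F_Q$-splittable extension, and one checks the discriminant must in fact be trivial because $A$ has degree $4$ and the underlying involution is symplectic (so arguing as in the split and quaternion cases of the previous proposition, or directly: a hyperbolic quadratic pair over $F_Q$ has trivial discriminant over $F_Q$, and restriction to $F$ is injective on discriminants of algebras that already have even coindex over $F$). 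With trivial discriminant, the Clifford algebra splits as a product of two quaternion algebras $H_+,H_-$ each with its canonical involution, and by~\cite[(15.12)]{Knus:1998} we have $(A,\sigma)\simeq (H_+,\can)\boxtimes(H_-,\can)$, equivalently $(A,\sigma,f)\simeq (H_+,\can)\otimes(H_-,\can,g)$ for any semi-trace $g$.

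Now I would translate hyperbolicity of $(A,\sigma,f)$ into a statement about the $5$-dimensional quadratic form attached to the biquaternion algebra with symplectic involution. Since $(A,\sigma,f)$ is non-hyperbolic, the underlying symplectic involution $\sigma$ on the biquaternion algebra $A\simeq H_+\otimes H_-$ must be \emph{anisotropic}: a non-anisotropic but non-hyperbolic symplectic involution in degree $4$ would make the associated quadratic pair isotropic but non-hyperbolic, which over $F_Q$ cannot become hyperbolic without already being so — more carefully, one uses that in degree $4$ an isotropic symplectic involution is hyperbolic (coindex $2$ forces this), so $(A,\sigma)$ isotropic $\Rightarrow$ hyperbolic $\Rightarrow$ the quadratic pair $(\sigma,f)$ built from it via the canonical semi-trace is hyperbolic, contradicting our assumption. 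Hence $\sigma$ is anisotropic. Since $(A,\sigma,f)_{F_Q}$ is hyperbolic and $(A,\sigma)\boxtimes(\cdot)$-type constructions are compatible with hyperbolicity, $(A,\sigma)_{F_Q}$ must be hyperbolic as well (the canonical semi-trace construction and restriction to $F_Q$ commute, and a hyperbolic quadratic pair restricts to a hyperbolic involution). So $(A,\sigma)$ is a degree-$4$ algebra with anisotropic symplectic involution that is hyperbolic over $F_Q$. Apply Theorem~\ref{thm:sympclass}. If we are in case (a), then $(A,\sigma)\simeq(Q',\tau)\otimes(Q,\can)$; by Lemma~\ref{decqp.lem} this gives $(A,\sigma,f)\simeq (Q,\can)\boxtimes(Q',\tau)$ — wait, one must check the semi-trace matches, but the canonical semi-trace is the unique one with the stated vanishing property when the other factor is symplectic, so indeed $(A,\sigma,f)$ contains $(Q,\can)$. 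If we are in case (b), then $(A,\sigma)\simeq\Ad_{\pff\lambda}\otimes(Q',\can)$ with $Q\otimes_F Q'$ division, and the forms attached force $C_0$-type data showing $A$ does not contain $Q$ at all — but then $(A,\sigma,f)$ cannot be hyperbolic over $F_Q$, which would be a contradiction unless this case simply does not occur here. The main obstacle, and the step requiring genuine care, is precisely pinning down \emph{why case (b) is impossible for a quadratic pair}: one has to use that a hyperbolic quadratic pair in degree $4$ has Clifford algebra a product of two split (or $Q$-equivalent) quaternion algebras after extension, and track the Brauer classes to rule out the division biquaternion from case (b); concretely, hyperbolicity of $(\sigma,f)$ over $F_Q$ forces $H_+$ and $H_-$ to become split or isomorphic to $Q$ over $F_Q$, hence by Amitsur each is split or $\simeq Q$, hence $A\simeq H_+\otimes H_-$ has index at most $2$ over $F$ in a way that makes $Q\otimes A$ of index at most $2$, contradicting $Q\otimes_F Q'$ division. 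Once case (b) is excluded, the conclusion that $(A,\sigma,f)$ contains $(Q,\can)$ follows directly from case (a) together with Lemma~\ref{decqp.lem}.
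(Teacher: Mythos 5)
The first half of your argument --- the reverse implication via \cite[A.5]{Tignol:galcohomgps}, the triviality of the discriminant because $F/\wp(F)$ injects into $F_Q/\wp(F_Q)$, and the resulting decomposition $(A,\sigma,f)\simeq(H_+,\can)\boxtimes(H_-,\can)$ from \cite[(15.12)]{Knus:1998} --- matches the paper. The gap is in your reduction to Theorem~\ref{thm:sympclass}. You claim the underlying involution $\sigma$ must be anisotropic, justifying this with the chain ``$(A,\sigma)$ isotropic $\Rightarrow$ hyperbolic $\Rightarrow$ the quadratic pair is hyperbolic.'' The last implication is false: a hyperbolic underlying involution does not force the quadratic pair to be hyperbolic. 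A counterexample sitting exactly in the situation of the proposition is $\Ad_{n_Q}\simeq(Q,\can)\boxtimes(Q,\can)$ with $Q$ division: this quadratic pair is anisotropic (since $n_Q$ is), it becomes hyperbolic over $F_Q$, and it does contain $(Q,\can)$ --- yet its underlying involution $\ad_{b_{n_Q}}$ is adjoint to an alternating form and hence hyperbolic. More generally $\can\otimes\can$ on $H_+\otimes H_-$ is hyperbolic whenever the biquaternion algebra is non-division. So $\sigma$ may well be isotropic, Theorem~\ref{thm:sympclass} (which requires an \emph{anisotropic} symplectic involution) does not apply, and the case analysis that follows collapses.

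The detour through the symplectic classification is also unnecessary. Once you have $(A,\sigma,f)\simeq(H_+,\can)\boxtimes(H_-,\can)$, hyperbolicity of a quadratic pair is detected directly by its Clifford algebra \cite[\S 8.E]{Knus:1998}: over $F_Q$ one component, say $H_+$, must split, so by Amitsur $H_+$ is either split over $F$ or isomorphic to $Q$. If $H_+$ were split, then $(M_2(F),\can)\boxtimes(H_-,\can)$ would already be hyperbolic over $F$, contradicting the hypothesis; hence $H_+\simeq Q$ and $(A,\sigma,f)\simeq(Q,\can)\boxtimes(H_-,\can)$ contains $(Q,\can)$. Your sketch for ``excluding case (b)'' invokes exactly these Clifford-algebra and Amitsur ingredients, but they belong in this direct argument rather than inside the flawed reduction to the involution theorem.
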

\begin{proof}
The reverse implication is a special case of \cite[A.5]{Tignol:galcohomgps}. So let us assume $(A,\sigma,f)$ is hyperbolic over $F_Q$. 
By~\cite[\S 8.E]{Knus:1998}, $(\sigma,f)_{F_Q}$ has trivial discriminant and one component of its Clifford algebra is split. Since $F/\wp(F)$ embeds in $F_Q/\wp(F_Q)$, the discriminant of $(\sigma,f)$ is trivial already over the base field $F$. Hence $(A,\sigma,f)\simeq (H_+,\can)\boxtimes (H_-,\can)$. Moreover, one of the two components, say $H_+$, is split over $F_Q$, hence $H_+$ is either split or isomorphic to $Q$. Since $(M_2(F),\can)\boxtimes( H_-,\can)\simeq (M_2(F),\can)\boxtimes (H_-,\can)$ is hyperbolic, $H_+$ cannot be split. Therefore $H_+=Q$ and  we have 
$(A,\sigma,f)\simeq (Q,\can)\boxtimes (H_-,\can).$\end{proof}

\small{}

\end{document}